\DeclareMathOperator{\im}{im}
\DeclareMathOperator{\Cl}{Cl}
\DeclareMathOperator{\Pic}{Pic}
\DeclareMathOperator{\Aut}{Aut}
\DeclareMathOperator{\lcm}{lcm}
\DeclareMathOperator{\End}{End}
\DeclareMathOperator{\HNF}{HNF}
\DeclareMathOperator{\Norm}{Norm}
\newcommand{\Pp}{\mathbb{P}}
\newcommand{\Z}{\mathbb{Z}}
\newcommand{\Q}{\mathbb{Q}}
\newcommand{\K}{\mathbb{K}}
\newtheorem{lem}{Lemma}[section]
\newtheorem{proposition}[lem]{Proposition}
\newtheorem{corollary}[lem]{Corollary}
\newtheorem{theorem}[lem]{Theorem}
\newtheorem{classification}[lem]{Classification}
\theoremstyle{definition}
\newtheorem{definition}[lem]{Definition}
\newtheorem{construction}[lem]{Construction}
\newtheorem{example}[lem]{Example}
\newtheorem{remark}[lem]{Remark}
\newtheorem{notation}[lem]{Notation}
\newtheorem{procedure}[lem]{Procedure}
\newtheorem{algorithm}[lem]{Algorithm}
\title[A classification algorithm for reflexive simplices]{A classification algorithm\\ for reflexive simplices}
\author{Marco Ghirlanda}
\email{marco.ghirlanda@uni-tuebingen.de}
\address{Mathematisches Institut, Auf d. Morgenstelle 10, 72076 Tübingen}
\date{}
\begin{document}

\begin{abstract}
    We present a general classification algorithm for reflexive simplices,
    which allows us to determine all reflexive simplices in dimensions
    five and six.
    In terms of algebraic geometry this means that we classify the
    Gorenstein fake weighted projective spaces in dimensions five and six.
    As a byproduct of our methods, we obtain explicit formulae for the
    Picard group and the Gorenstein index of any fake weighted
    projective space.
\end{abstract}

\maketitle

\section{Introduction}

A \emph{reflexive polytope} is a lattice polytope having the origin
in its interior and whose dual is also a lattice polytope.
This concept showed up in~\cite{B}, where among other things,
Batyrev determines all reflexive polygons up to unimodular
equivalence.
Kreuzer and Skarke extended this classification in dimensions three
and four, see~\cites{KS3,KS4}. 
A complete classification of all reflexive polygons in dimension 5
or higher seems to be out of reach.
However, Sch\"oller and Skarke could at least compute the 
\emph{weight systems} of the reflexive polytopes in dimension
five, see~\cite{SS}.

In the present note, we consider the particular case of
\emph{reflexive simplices}.
The aforementioned classifications comprise in particular the
5 reflexive triangles, the 48 reflexive 3-simplices and the
1561 reflexive 4-simplices.
The main result of this article is the general classification
algorithm~\ref{algo:classification} for reflexive simplices.
It allows us in particular to obtain the following.

\begin{classification}\label{class:reflexive}
Up to unimodular equivalence, there are $220 \, 794$ reflexive simplices
in dimension five and $309 \, 019 \, 970$ reflexive simplices in dimension six.
The complete data are available at \cite{zenodo:17296449}.
\end{classification}

On a midrange computer, with 16 threads, the algorithm terminates
in less than one minute for dimension five, and in approximately
20 days for dimension six.

We succeed by working mostly in terms of \emph{degree matrices}, a Gale dual encoding of
lattice simplices by means of a matrix with columns in an abelian group~$\Z \times \Gamma$ with~$\Gamma$ finite; see Section~\ref{sec:fwps} for the precise formulation.
This framework has two main advantages. First, unimodular equivalences of the lattice simplices correspond to automorphisms of
$\Z \times \Gamma$. Theorem~\ref{isom theorem} provides explicit generators of the automorphism
group of $\Z \times \Gamma$, which allows early pruning of the possible degree matrices. Second, Proposition \ref{prop:gor_fwps} turns the reflexivity condition for the simplex into explicit conditions on the degree matrix: one condition involves only the free row, and the others one torsion row at a time. Hence, we find all reflexive degree matrices by computing the admissible free rows, enumerating the torsion rows compatible with each free row, and suitably combining them.

Our results directly apply to toric geometry. Recall that the reflexive simplices are exactly the Fano polytopes
of the Gorenstein \emph{fake weighted projective spaces (fwps)}, that means, the $\Q$-factorial toric
varieties of Picard number one whose anticanonical divisor is ample and Cartier.
So, in the setting of toric varieties, Classification \ref{class:reflexive} tells us the
following.

\begin{corollary}\label{corclass}
Up to isomorphism, there are $220 \, 794$ Gorenstein fwps of dimension five
and $309 \, 019 \, 970$ Gorenstein fwps of dimension six.
\end{corollary}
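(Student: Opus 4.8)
The plan is to reduce Corollary~\ref{corclass} to Classification~\ref{class:reflexive} by exhibiting a bijection between unimodular equivalence classes of reflexive simplices and isomorphism classes of Gorenstein fwps, and then transporting the counts. First I would recall the standard toric dictionary. Fix a lattice $N \cong \Z^n$ with associated real vector space $N_\R$. A fake weighted projective space of dimension $n$ is a complete $\Q$-factorial toric variety of Picard number one; the Picard number one condition forces its fan $\Sigma$ to be simplicial with exactly $n+1$ rays, whose primitive generators $v_0,\dots,v_n$ positively span $N_\R$ and admit a relation $\sum_i \lambda_i v_i = 0$ with all $\lambda_i > 0$. The convex hull $P = \conv(v_0,\dots,v_n)$ is then a lattice simplex containing the origin in its interior, and $\Sigma$ is recovered as the face fan of $P$. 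Conversely, the face fan of any such simplex yields a fwps. This gives a bijection between lattice simplices with the origin as an interior point and fwps, once the lattice $N$ is fixed.

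Next I would match the Gorenstein condition with reflexivity. On the variety $X_\Sigma$ the anticanonical divisor is $-K_{X_\Sigma} = \sum_i D_i$, the sum of the torus invariant prime divisors; for the face fan of a simplex with the origin in its interior this divisor is automatically ample, so the only remaining requirement is that $-K_{X_\Sigma}$ be Cartier. Cartierness of $-K_{X_\Sigma}$ is equivalent to the dual polytope $P^\ast$ being a lattice polytope, that is, to $P$ being reflexive. Hence the above bijection restricts to one between reflexive simplices and Gorenstein fwps.

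It remains to reconcile the two notions of ``up to equivalence''. Two fwps attached to simplices $P$ and $P'$ in $N$ are isomorphic as abstract varieties precisely when $P$ and $P'$ are unimodularly equivalent, i.e.\ related by an element of $\mathrm{GL}(N) \cong \mathrm{GL}_n(\Z)$. The implication from unimodular equivalence to isomorphism is immediate, since a lattice automorphism carries one face fan to the other. For the converse one uses the rigidity of the toric structure on a complete toric variety of Picard number one: any isomorphism is induced by a lattice isomorphism of the fans, so that isomorphic fwps have unimodularly equivalent Fano polytopes. I expect this converse to be the only genuinely delicate point, as it requires knowing that the ambient torus, and hence the fan, are recovered canonically from the variety; this is standard for fwps but deserves an explicit citation.

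Finally, combining these three steps, the assignment $P \mapsto X_{\Sigma(P)}$ descends to a bijection from unimodular equivalence classes of reflexive simplices onto isomorphism classes of Gorenstein fwps in every fixed dimension. Applying this in dimensions five and six and reading off the numbers $220\,794$ and $309\,019\,970$ from Classification~\ref{class:reflexive} yields the asserted counts, completing the proof.
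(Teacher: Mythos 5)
Your proposal is correct and follows essentially the same route as the paper, which derives the corollary from Classification~\ref{class:reflexive} via the standard toric dictionary identifying reflexive simplices up to unimodular equivalence with Gorenstein fwps up to isomorphism. The paper states this identification without proof; your fleshing out of the three steps (face-fan correspondence, Gorenstein~$\Leftrightarrow$~reflexive, isomorphism~$\Leftrightarrow$~unimodular equivalence) is exactly the intended argument.
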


Moreover, the simplices of the classification lists directly yield the defining fan of
the corresponding fwps: the maximal cones are the cones over the facets of the simplex.
Beyond that, we present explicit formulae for the Picard group and
the Gorenstein index of a fwps in terms of its degree matrix, see Theorem \ref{thm:pic_fwps}.

\tableofcontents

\section{Automorphisms of finitely generated abelian groups}\label{sec:groups}

The main result of this section explicitly provides
generators of the automorphism
group of a finitely generated abelian group $G$. A first description of $\Aut(G)$ was given in \cite{R} by Ranum. In \cite{HR}, Hillar and Rhea characterized $\Aut(G)$ when $G$ is a finite abelian group, and provided a formula for its cardinality. Both articles used the primary decomposition of the group $G$. We will instead present any finitely generated abelian group $G$ in 
\emph{invariant factor form}, that means that 
\[
G
\ = \
\Z^k \oplus \Z/\mu_1\Z \oplus \ldots \oplus \Z/\mu_r\Z,
\qquad
\mu_r \mid \mu_{r-1}, \ldots, \mu_2 \mid \mu_1.
\]
This presentation involves in general fewer cyclic factors. This in turn has significant impact on the performance of the algorithms presented later.  

Write $(w,\eta)$
for the elements of $G$, where $w \in \Z^k$ and
$\eta\in \Z/\mu_1\Z \oplus \ldots \oplus \Z/\mu_r\Z$.
Observe that we obtain well-defined automorphisms of $G$,
each modifying the $i$-th coordinate of either $w$ or $\eta$, by
\[
\begin{array}{lcll}  
\psi_i(w,\eta) & := & (w_1,\ldots,-w_i,\ldots,w_k,\eta), & \quad 1 \le i \le k,
\\[3pt]
\psi_{i,u}(w,\eta) & := & (w,\eta_1, \ldots, u\eta_i, \ldots, \eta_r), & \quad 1 \le i \le  r, \ u \in (\Z/\mu_i\Z)^*, 
\\[3pt]
\alpha_{i,j}(w,\eta) & := & (w_1,\ldots, w_j + w_i, \ldots, w_k,\eta), & \quad 1 \le i,j \le k, \ i \ne j,
\\[3pt]
\beta_{i,j}(w,\eta) & := & (w,\eta_1, \ldots, w_j + \eta_i, \ldots, \eta_r), & \quad 1 \le j \le k, 1 \le i \le r,
\\[3pt]
\gamma_{i,j}(w,\eta) & := & (w,\eta_1, \ldots, \eta_j + \eta_i, \ldots, \eta_r), & \quad 1 \le j < i \le r,
\\[3pt]
\delta_{i,j}(w,\eta) & := & (w,\eta_1, \ldots, \frac{\mu_i}{\mu_j}\eta_j + \eta_i, \ldots, \eta_r), & \quad 1 \le i < j \le r.
\end{array}
\] 

\begin{theorem}\label{isom theorem}
Let $G = \Z^k \oplus \Z/\mu_1\Z \oplus \ldots \oplus \Z/\mu_r\Z$ be in invariant factor form.
Then $\Aut(G)$ is generated by the automorphisms
$\psi_i, \psi_{i,u}, \alpha_{i,j}, \beta_{i,j}, \gamma_{i,j}$ and $\delta_{i,j}$.
\end{theorem}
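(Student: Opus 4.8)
The plan is to exploit the characteristic subgroup structure of $G$ to reduce the statement to three independent generation problems, one for each ``block'' of an automorphism. Write $T := \Z/\mu_1\Z \oplus \ldots \oplus \Z/\mu_r\Z$ for the torsion subgroup of $G$; it is characteristic, so every $\phi \in \Aut(G)$ restricts to an automorphism of $T$, while torsion generators cannot map to elements with nonzero free part. Thus $\phi$ corresponds to a block lower-triangular datum
\[
\phi \ \leftrightarrow \ \begin{pmatrix} A & 0 \\ C & D \end{pmatrix},
\qquad
A \in \mathrm{GL}_k(\Z), \quad D \in \Aut(T), \quad C \in \operatorname{Hom}(\Z^k, T),
\]
and conversely any such triple with $A$ invertible over $\Z$, $D \in \Aut(T)$, and $C$ arbitrary defines an automorphism (injectivity and surjectivity are immediate from the block form). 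This exhibits $\Aut(G)$ as a semidirect product $\operatorname{Hom}(\Z^k,T) \rtimes (\mathrm{GL}_k(\Z) \times \Aut(T))$, and it matches the six families of generators to the blocks: $\psi_i, \alpha_{i,j}$ act on $A$, the maps $\beta_{i,j}$ act on $C$, and $\psi_{i,u}, \gamma_{i,j}, \delta_{i,j}$ act on $D$.

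First I would verify that the candidates really lie in $\Aut(G)$ and act as claimed; the only delicate point is well-definedness of the torsion operations, which forces the factor $\mu_i/\mu_j$ in $\delta_{i,j}$ and dictates the index ranges of $\gamma_{i,j}$, since a homomorphism $\Z/\mu_j\Z \to \Z/\mu_i\Z$ sending $1 \mapsto c$ exists exactly when $(\mu_i/\gcd(\mu_i,\mu_j)) \mid c$. Then I would reduce an arbitrary $\phi$ in a fixed order. I first use $\psi_i$ and $\alpha_{i,j}$ to reduce $A$ to the identity: this is the classical fact that $\mathrm{GL}_k(\Z)$ is generated by sign changes and elementary transvections, permutations being products of these. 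Once $A = I$, left multiplication by the $\beta_{i,j}$ adds elementary matrices to $C$ without disturbing $A$ or $D$, so repeated application clears $C$ to $0$. At this stage $\phi$ has been reduced, modulo the listed generators, to a pure torsion automorphism $D \in \Aut(T)$.

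The heart of the argument, and the step I expect to be the main obstacle, is showing that $\psi_{i,u}, \gamma_{i,j}, \delta_{i,j}$ generate $\Aut(T)$. I would model $D$ by an integer matrix $(d_{ij})$ subject to the invariant-factor constraints $(\mu_i/\mu_j)\mid d_{ij}$ for $i \le j$ and no constraint for $i > j$, so that the admissible transvections split into the ``unconstrained'' ones below the diagonal, realized by the $\gamma_{i,j}$, and the ``constrained'' ones above the diagonal, realized by the $\delta_{i,j}$ with their minimal admissible coefficient $\mu_i/\mu_j$; the diagonal unit-scalings are the $\psi_{i,u}$. The goal is a Smith-normal-form-style reduction of $(d_{ij})$ to the identity using only these operations, organized by induction on $r$: clear the first column with the $\gamma_{i,1}$ (all entries unconstrained), clear the first row with the $\delta_{1,j}$, normalize the corner entry with a $\psi_{1,u}$, and recurse on the remaining $(r-1)$-factor block.

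The delicate part of this induction is the existence of a \emph{unit pivot}: to clear a row or column one needs a diagonal entry that is a unit modulo the corresponding $\mu$, and this must be extracted from the mere invertibility of $D$. Here I would use that $D \in \Aut(T)$ forces, for every prime $p \mid \mu_1$, an invertible induced map on the associated $p$-elementary quotient, which guarantees a unit in an appropriate position; within a maximal block of equal invariant factors $\mu_i = \cdots = \mu_{i+s}$ the operations $\gamma, \delta, \psi_{i,u}$ specialize (their factors become $1$) to all transvections and unit-scalings, hence generate the full $\mathrm{GL}$ of that block over $\Z/\mu_i\Z$, allowing me to permute a unit into pivot position. Verifying that these constrained operations suffice to realize \emph{every} automorphism, rather than only a proper subgroup, is exactly the finite-abelian-group analogue of elementary-matrix generation of $\mathrm{GL}_k(\Z)$, and carrying the divisibility bookkeeping through the induction is where the real work lies; a cardinality cross-check against the Hillar--Rhea order formula for $\lvert\Aut(T)\rvert$ would serve as a useful sanity test at the end.
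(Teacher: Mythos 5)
Your overall scaffolding coincides with the paper's: represent $\phi$ by a block lower-triangular $G$-matrix, reduce the $\mathrm{GL}_k(\Z)$ block to the identity with $\psi_i,\alpha_{i,j}$, clear the mixed block with the $\beta_{i,j}$, and then reduce the torsion block by a pivoting induction (the paper pivots on the last invariant factor first, you on the first; both orders work). The genuine gap is in your mechanism for producing the unit pivot, which you yourself identify as the crux. You propose to locate a unit entry via invertibility on the $p$-elementary quotients and then \emph{permute} it into pivot position using the full $\mathrm{GL}$ available on a maximal block of equal invariant factors. This fails for two reasons. First, no single entry of the relevant row need be a unit: for $T=\Z/6\Z\oplus\Z/6\Z$ the matrix $\left(\begin{smallmatrix}2&3\\3&2\end{smallmatrix}\right)$ has determinant $1$ in $\Z/6\Z$, hence lies in $\Aut(T)$, yet every entry is a zero divisor; the units guaranteed mod $2$ and mod $3$ sit in different positions, so no permutation brings a unit to the corner. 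Second, the unit mod $p$ may sit in a column with a strictly smaller invariant factor (e.g.\ $\mu_1=6$, $\mu_2=2$, $D=\left(\begin{smallmatrix}2&3\\1&0\end{smallmatrix}\right)$, which is an automorphism with $d_{11}=2$), in which case it does not lie in any equal-invariant-factor block containing the pivot position, and your permutation device is simply unavailable.

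What is actually needed --- and what the paper supplies as Lemma~\ref{gcd lemma} --- is a $\Z$-linear combination rather than a permutation: invertibility of $D$ modulo every prime $p\mid\mu_1$ yields $\gcd(d_{11},d_{12},\dots,d_{1r},\mu_1)=1$, and a Chinese-remainder argument then produces integers $c_2,\dots,c_r$ such that $d_{11}+\sum_j c_j d_{1j}$ is a unit modulo $\mu_1$; adding $c_j$ times column $j$ to column $1$ is exactly the unconstrained operation $\gamma_{j,1}^{c_j}$, so the combination is realizable by your generators (in both examples above, adding column $2$ to column $1$ turns the corner into the unit $5$). With this replacement your induction closes and the remaining steps (clearing row and column through the normalized pivot, recursing on the $(r-1)$-block) are correct; without it the pivot step does not go through.
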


The proof of Theorem \ref{isom theorem} is given at the end of this section.
As a first preparatory step, we develop a matrix calculus for the $\Z$-algebra of endomorphisms
of an abelian group in invariant factor form.

\begin{definition}
\label{def:gmatrix}
Let $G = \Z^k \oplus \Z/\mu_1\Z \oplus \ldots \oplus \Z/\mu_r\Z$ be in invariant factor form.
By a \emph{$G$-matrix} we mean a square matrix of the shape
\[
A \ = \ \left(a_{ij}\right) \ = \ \begin{bmatrix} A_1 & 0 \\ \ast & A_2 \end{bmatrix},
\]
where the columns of $A$ are elements of $G$, the block $A_1$ is a $k \times k$ square matrix,
the block $A_2$ is an $r \times r$ matrix and the entries of $A_2$ satisfy
\[
\mu_i a_{k+i,k+j} \ = \ 0 \ \in \ \Z/\mu_j\Z, \quad 1\leq i<j\leq r.
\]
\end{definition}

\begin{remark}
Let $G$ be an abelian group in invariant factor form,
$\omega$ an element of~$G$ and $A,A'$ two $G$-matrices.  
The conditions on $G$-matrices ensure that
the formal matrix-vector product $A \cdot \omega$
is a well defined element of $G$ and the formal
matrix-matrix product $A \cdot A'$ is a $G$-matrix.
\end{remark}

\begin{construction}
\label{constr:end2mat}
Consider $G=\Z^k\oplus \Z/\mu_1\Z\oplus\dots\oplus\Z/\mu_r\Z$ in invariant factor form
and denote by $e_i\in G$ the element with $i$-th component $1$ and all others $0$.
With any endomorphism $\varphi \colon G \to G$ we associate the $(k+r) \times (k+r)$
square matrix
\[
A(\varphi)
\ := \ 
\left( \varphi_{ij} \right)
\ := \
[\varphi(e_1), \ldots, \varphi(e_{k+r})].
\]
\end{construction}

\begin{lem} \label{endomorphism lemma}
Let $G=\Z^k\oplus \Z/\mu_1\Z\oplus\dots\oplus\Z/\mu_r\Z$ be in invariant factor form,
and let $\varphi\in\End(G)$.
Then the matrix $A(\varphi)$ from Construction \ref{constr:end2mat} is a $G$-matrix.
\end{lem}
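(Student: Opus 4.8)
The plan is to verify directly the three defining properties of a $G$-matrix from Definition~\ref{def:gmatrix} for the matrix $A(\varphi)=[\varphi(e_1),\ldots,\varphi(e_{k+r})]$ of Construction~\ref{constr:end2mat}. The first property, that the columns of $A(\varphi)$ are elements of $G$, is immediate: the $\ell$-th column is $\varphi(e_\ell)$, which lies in $G$ because $\varphi$ maps $G$ to $G$. It then remains to establish the block-triangular shape (vanishing of the top-right $k\times r$ block) and the annihilation condition on the bottom-right block $A_2$. Both will follow from the single observation that, being a homomorphism, $\varphi$ respects the torsion relations of $G$.

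The key input is that each generator $e_{k+j}$ of the torsion part has order $\mu_j$, so $\mu_j e_{k+j}=0$ in $G$ and hence $\mu_j\,\varphi(e_{k+j})=0$ for every $1\le j\le r$. First I would use this to produce the zero block. Write $\varphi(e_{k+j})=(v^{(j)},\eta^{(j)})$ with $v^{(j)}\in\Z^k$ the free part and $\eta^{(j)}$ the torsion part. Then $\mu_j\,\varphi(e_{k+j})=0$ forces $\mu_j v^{(j)}=0$ in $\Z^k$; since $\Z^k$ is torsion-free and $\mu_j\ge 1$, this gives $v^{(j)}=0$. As $j$ ranges over $1,\ldots,r$, this says precisely that the first $k$ entries of each of the last $r$ columns vanish, i.e.\ the top-right $k\times r$ block of $A(\varphi)$ is zero; thus $A(\varphi)$ has the required shape with $A_1$ of size $k\times k$ and $A_2$ of size $r\times r$.

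For the condition on the entries of $A_2$, I would read off the $i$-th torsion coordinate of the same identity $\mu_j\,\varphi(e_{k+j})=0$. This yields $\mu_j\,a_{k+i,k+j}=0$ in $\Z/\mu_i\Z$; equivalently, the order of the entry $a_{k+i,k+j}$ in $\Z/\mu_i\Z$ divides $\mu_j$. In view of the invariant-factor divisibilities $\mu_r\mid\cdots\mid\mu_1$, this is automatic whenever $i\ge j$ (so that $\mu_i\mid\mu_j$) and becomes the genuine restriction required by Definition~\ref{def:gmatrix} exactly in the range $1\le i<j\le r$. With this, all defining requirements of a $G$-matrix are met.

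I do not expect a serious obstacle here: the entire argument is forced by $\varphi$ being a homomorphism together with the freeness of $\Z^k$. The only point requiring care is the bookkeeping for the torsion block, namely keeping track of which coordinate lives in which cyclic factor and invoking the divisibility $\mu_r\mid\cdots\mid\mu_1$ to see that the annihilation condition is vacuous for $i\ge j$ and nontrivial only for $i<j$.
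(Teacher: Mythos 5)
Your proof is correct and is essentially an expanded version of the paper's one-line argument, which likewise derives everything from $0=\varphi(0)=\varphi(\mu_i e_{k+i})=\mu_i\varphi(e_{k+i})$ and then reads off both the vanishing of the top-right block (torsion-freeness of $\Z^k$) and the condition on $A_2$. The only point of friction is notational: you state the annihilation condition as $\mu_j a_{k+i,k+j}=0$ in $\Z/\mu_i\Z$, which is the reading forced by row--column indexing, whereas Definition~\ref{def:gmatrix} literally prints $\mu_i a_{k+i,k+j}=0\in\Z/\mu_j\Z$; this appears to be an index transposition in the definition that your bookkeeping silently corrects.
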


\begin{proof}
    The assertion follows directly from $ 0=\varphi(0)=\varphi(\mu_i e_{k+i})=\mu_i \varphi(e_{k+i}).
    $
\end{proof}

\begin{remark}
For any abelian group $G$ in invariant factor form, the matrices from
Construction~\ref{constr:end2mat} represent the endomorphisms:
for all $\varphi,\varphi_1,\varphi_2 \in \End(G)$ and~$\omega \in G$,
we have
\[
\varphi(\omega) \ = \ A(\varphi) \cdot \omega,
\qquad
A(\varphi_2 \circ \varphi_1) \ = \ A(\varphi_2) \cdot A(\varphi_1).
\]
Conversely, every $G$-matrix defines an endomorphism via
matrix-vector multiplication and these assignments are
inverse to each other.
\end{remark}

\begin{example}\label{ex:rowop}
The matrices associated with 
$\psi_i, \psi_{i,u}, \alpha_{i,j}, \beta_{i,j}, \gamma_{i,j}$
and $\delta_{i,j}$ from Theorem~\ref{isom theorem} are
$G$-matrices.
Multiplication of these $G$-matrices from the left (right) to a given $G$-matrix
yields the elementary row (column) operations preserving the structure of $G$-matrix.
\end{example}

The following statement gives a characterization of generator systems of the group $G$.
It will be essentially used in the proof of Theorem~\ref{isom theorem} and in the later
algorithmic Section~\ref{sec:algos}.

\begin{proposition}\label{generation condition}
    Let $G=\Z^k\oplus \Z/\mu_1\Z\oplus\dots\oplus\Z/\mu_r\Z$ be in invariant factor form, and let $\omega_i=(w_i,\eta_i)\in G$ for $i=1,\dots,n$. For $j=0,\dots,r$ set
    $$
    Q_j\coloneqq \begin{bmatrix}
        w_1 &\dots &w_n\\
        \eta_{11} & \dots & \eta_{n1}\\
        \vdots & & \vdots\\
        \eta_{1j} & \dots & \eta_{nj}
    \end{bmatrix}.
    $$
    Then $G=\langle \omega_1,\dots,\omega_n\rangle$ if and only if the maximal minors of $Q_0$ generate $\Z$ and the maximal minors of $Q_j$ generate $\Z/\mu_j\Z$ for any $j=1,\dots,r$. 
\end{proposition}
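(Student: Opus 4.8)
The plan is to prove both directions by reducing the generation question for $G$ to generation questions for its successive quotients, handled via the maximal-minor criterion over $\Z$ and over each cyclic group $\Z/\mu_j\Z$.

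First I would set up the bookkeeping. The matrix $Q_j$ records the free components and the first $j$ torsion components of the $\omega_i$; in particular $Q_0$ is just the $k\times n$ block $[w_1,\dots,w_n]$ of free parts, and each $Q_j$ is obtained from $Q_{j-1}$ by appending the row of $j$-th torsion coordinates. The key structural observation is that $G$ carries the filtration by the subgroups $G_j := \Z^k \oplus \Z/\mu_1\Z \oplus \dots \oplus \Z/\mu_j\Z$, with $G_0 = \Z^k$ and $G_r = G$, and that $G_j / G_{j-1} \cong \Z/\mu_j\Z$. I would phrase generation as an inductive statement: the $\omega_i$ generate $G$ if and only if, for every $j$, their images generate $G_j$, which in turn splits into generating $G_{j-1}$ together with hitting a generator of the quotient $G_j/G_{j-1}$.

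The core of the argument is the following two facts, which I would prove and then iterate. \emph{Over $\Z$:} a list of integer vectors $w_1,\dots,w_n \in \Z^k$ generates $\Z^k$ if and only if the $k\times k$ maximal minors of $[w_1,\dots,w_n]$ generate $\Z$ (equivalently, have gcd $1$); this is the classical Smith normal form / surjectivity-of-the-map criterion. This handles the $Q_0$ condition and the base of the induction. \emph{One torsion step:} assuming the images of the $\omega_i$ already generate $G_{j-1}$, they generate $G_j$ if and only if their $j$-th torsion coordinates, together with the relations imposed by the already-available generators of $G_{j-1}$, surject onto $\Z/\mu_j\Z$. The subtle point is that we may modify each $\omega_i$ by any element of $\langle \omega_1,\dots,\omega_n\rangle \cap G_{j-1}$ before reading off its $j$-th coordinate, and such modifications correspond exactly to integral column operations on $Q_{j-1}$, i.e.\ to taking $\Z$-linear combinations of the columns. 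Under such operations the maximal minors of $Q_{j-1}$ and the bordered minors of $Q_j$ transform by the Cauchy--Binet / cofactor expansion, and one checks that the subgroup of $\Z/\mu_j\Z$ generated by the attainable $j$-th coordinates is precisely the image modulo $\mu_j$ of the maximal minors of $Q_j$ (the new, size-increased minors coming from the appended row) together with $\mu_j$ times the minors of $Q_{j-1}$ — which is exactly the subgroup generated by the maximal minors of $Q_j$ in $\Z/\mu_j\Z$. Expanding each maximal minor of $Q_j$ along its last row via Laplace makes the link between the minors of $Q_j$ and the column combinations of $Q_{j-1}$ transparent.

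I would then assemble the induction: by the $\Z$-case, the $Q_0$ condition is equivalent to the $\omega_i$ generating $G_0$; and for each $j$, granted that they generate $G_{j-1}$ (the inductive hypothesis, equivalent to the minor conditions for $Q_0,\dots,Q_{j-1}$), the one-torsion-step fact shows the $Q_j$ condition is equivalent to their generating $G_j$. Running $j$ from $1$ to $r$ yields generation of $G_r = G$ iff all the stated minor conditions hold. I expect the main obstacle to be the torsion step, specifically proving rigorously that the column operations available on $Q_{j-1}$ (reflecting the freedom to adjust representatives by elements of the already-generated subgroup) translate, via Laplace expansion along the appended row, into exactly the subgroup of $\Z/\mu_j\Z$ generated by the maximal minors of $Q_j$; keeping careful track of which minors survive modulo $\mu_j$ and which vanish because of the divisibility $\mu_j \mid \mu_i$ for $i<j$ is where the invariant-factor hypothesis enters and must be used with care.
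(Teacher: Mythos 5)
Your strategy is sound and genuinely different from the paper's. You induct along the filtration $G_0\le G_1\le\dots\le G_r$ and, at each stage, identify the attainable $j$-th torsion coordinates of relations among the columns of $Q_{j-1}$ with the subgroup of $\Z/\mu_j\Z$ generated by the maximal minors of $Q_j$. The paper instead argues the two directions separately and globally: for necessity it factors each $\mu_j$ into prime powers, reduces modulo each prime $p$, uses that the images of the $\omega_i$ span the vector space $(\Z/p\Z)^{k+j}$ to produce a nonvanishing maximal minor, and reassembles via the Chinese remainder theorem; for sufficiency it lifts everything to the single integer matrix $M=[Q'_r,B]$ obtained by adjoining the relation columns $\mu_ie_{k+i}$, shows by a gcd manipulation exploiting $\gcd(q_j,\mu_j)=1$ that the maximal minors of $M$ have gcd one, and then applies classical determinantal-divisor theory to $\Z^{k+r}$ in one stroke. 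Your route buys a cleaner conceptual correspondence (one minor condition per filtration step), but the price is the one-torsion-step lemma, which you only sketch. The inclusion ``minors are attainable'' does follow from your Laplace expansion, since the signed cofactor vector of any $(k+j-1)\times(k+j)$ submatrix of an integer lift of $Q_{j-1}$ lies in the relation lattice; the reverse inclusion --- that every attainable $j$-th coordinate lies in the ideal of minors --- is the real content and is not delivered by ``column operations'' alone. It is true, and is most cleanly seen as the statement that the zeroth Fitting ideal of the $\Z/\mu_j\Z$-module $G_j/\langle\omega_1,\dots,\omega_n\rangle$ equals its annihilator because that module is cyclic --- cyclicity being exactly your inductive hypothesis, with the relation columns $\mu_ie_{k+i}$ dying modulo $\mu_j$ thanks to the invariant-factor divisibilities, just as you anticipate. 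If you supply that argument (or an equivalent one), your proof is complete; the paper's augmented-matrix computation avoids the lemma entirely at the cost of a less transparent gcd argument.
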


\begin{proof}
    Let $G=\langle \omega_1,\dots,\omega_n\rangle$. By classical determinantal divisors theory, the maximal minors of $Q_0$ generate $\Z$. Fix $j=1,\dots,r$, and consider the prime factorization~$\mu_j=p_1^{m_1}\cdots p_t^{m_t}$. Now, fix $i=1,\dots,t$, and consider the projection onto the first $k+j$ coordinates followed by the coordinate-wise projection onto $\Z/p_i\Z$:
    $$
    \pi\colon G\rightarrow(\Z/p_i\Z)^{k+i}.
    $$
    Then $\pi(\omega_1),\dots,\pi(\omega_n)$ forms a basis of the vector space $(\Z/p_i\Z)^{k+j}$, encoded by the matrix $Q_j$ modulo $p_i$. In particular, there exists a maximal minor of $Q_j$ generating~$\Z/p_1\Z$, and hence $\Z/p_1^{m_1}\Z$. The assertion follows since, by the Chinese remainder theorem, we have an isomorphism
    $$
    \Z/\mu_j\Z\rightarrow\Z/{p_1^{m_1}}\Z\oplus\dots\oplus\Z/p_t^{m_t}\Z, \quad     \eta\mapsto(\eta\mod p_1^{m_1},\dots,\eta\mod p_t^{m_t}).
    $$
    
    Now assume that the maximal minors of $Q_0$ generate $\Z$ and the maximal minors of $Q_j$ generate $\Z/\mu_j\Z$ for any $j=1,\dots,r$.
    Let $\eta'_{ij}\in\Z_{\geq 0}$ be the representing integer of  $\eta_{ij}$ between $0$ and $\mu_j-1$, and consider the integer matrix
    $$
    Q'_j\coloneqq \begin{bmatrix}
        w_1 &\dots &w_n\\
        \eta'_{11} & \dots & \eta'_{n1}\\
        \vdots & & \vdots\\
        \eta'_{1j} & \dots & \eta'_{nj}
    \end{bmatrix}.
    $$
    Denote by $q_j$ the greatest common divisor of all maximal minors of $Q'_j$. By assumption, we have $q_0=1$ and $\gcd(q_j,\mu_j)=1$.
    With the vectors~$b_i\coloneqq\mu_i e_{k+i}$ we define the $(k+r)\times(n+r)$-matrix
    $$
    M\coloneqq[Q'_r,B],\qquad B\coloneqq[b_1,\dots,b_r].
    $$
    Denote by $\Delta$ the greatest common divisor of all maximal minors of $M$. Notice that~$\Delta$ divides $\mu_r\cdots\mu_{i+1}q_{i}$ for all $i=0,\dots,r$.
    In particular, $\Delta$ divides \begin{align*}
        &\gcd(q_r,\mu_r q_{r-1},\mu_r\mu_{r-1}q_{r-2},\dots,\mu_r\cdots\mu_1q_0)\\
        =&\gcd(q_r,\mu_r\gcd(q_{r-1},\mu_{r-1}q_{r-2},\dots,\mu_{r-1}\cdots\mu_1q_0)).
    \end{align*}
    Since $\gcd(q_r,\mu_r)=1$, we have
    \begin{align*}
    &\gcd(q_r,\mu_r\gcd(q_{r-1},\mu_{r-1}q_{r-2},\dots,\mu_{r-1}\cdots\mu_1q_0))\\
    =&\gcd(q_r,q_{r-1},\mu_{r-1}q_{r-2},\dots,\mu_{r-1}\cdots\mu_1q_0).
    \end{align*}
    Iterating the argument for $\mu_{r-1},\dots,\mu_1$, we see that $\Delta$ divides~$\gcd(q_r,\dots,q_0)$. Since~$q_0=1$, it follows that $\Delta=1$. By classical determinantal divisors theory, it follows that
    $\Z^{k+r}=\langle M_{*,1},\dots,M_{*,n+r}\rangle$. Thus, the equality~$G=\langle \omega_1,\dots,\omega_n\rangle$ follows from
    $$
    \dfrac{G}{\langle \omega_1,\dots,\omega_r\rangle}\cong\dfrac{\Z^{k+r}}{\langle M_{*,1},\dots,M_{*,n+r}\rangle}.
    $$
    
\end{proof}

\begin{lem}\label{gcd lemma}
    Let $a,b,c\in\Z$ such that $\gcd(a,b,c)=1$. Then there exists $k\in\Z$ such that $\gcd(a+kb,c)=1$.
\end{lem}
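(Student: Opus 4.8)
The plan is to argue one prime at a time. Since $\gcd(a+kb,c)=1$ is equivalent to saying that no prime divisor of $c$ divides $a+kb$, I would fix the finitely many primes $p_1,\dots,p_s$ dividing $c$ and try to choose $k$ so that $p_i \nmid a+kb$ for every $i$.

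First I would separate these primes into two groups according to whether they divide $b$. If $p \mid b$ and $p \mid c$, then $a+kb \equiv a \pmod{p}$ for every $k$; moreover $p \nmid a$, since otherwise $p$ would divide all three of $a,b,c$, contradicting $\gcd(a,b,c)=1$. Hence such a prime never divides $a+kb$, irrespective of the choice of $k$. If instead $p \nmid b$, then $b$ is invertible modulo $p$, and the congruence $a+kb \equiv 0 \pmod{p}$ has the unique solution $k \equiv -ab^{-1} \pmod{p}$; thus $p \nmid a+kb$ precisely when $k$ avoids this single residue class modulo $p$.

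It then remains to produce a single integer $k$ that simultaneously avoids one prescribed residue class modulo each prime $p$ of the second group. Since distinct primes are pairwise coprime and each prime leaves at least $p-1 \ge 1$ admissible residues, the Chinese remainder theorem yields such a $k$: for each $p$ in the second group pick any residue different from $-ab^{-1} \bmod p$, and patch these choices together. For this $k$, no prime divisor of $c$ divides $a+kb$, whence $\gcd(a+kb,c)=1$, as desired.

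The only delicate point is the interaction of the two cases: the hypothesis $\gcd(a,b,c)=1$ is used precisely to exclude the configuration $p \mid a$ and $p \mid b$ for a prime $p \mid c$, which is the one situation that could not be repaired by any shift of $a$ by a multiple of $b$. Beyond isolating this case, everything reduces to a routine application of the Chinese remainder theorem, so I expect no real obstacle; one could even make $k$ explicit by taking a suitable product over the offending primes, but the congruence formulation is cleaner.
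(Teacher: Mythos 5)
Your proof is correct and follows essentially the same route as the paper's: analyze each prime divisor of $c$ separately, use $\gcd(a,b,c)=1$ to rule out the one irreparable configuration, and glue the local choices of $k$ together with the Chinese remainder theorem. The only cosmetic difference is that the paper organizes the case split by whether $p$ divides $a$ (choosing $k_i\in\{0,1\}$ explicitly), whereas you organize it by whether $p$ divides $b$ and phrase the second case via the forbidden residue $-ab^{-1} \bmod p$.
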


\begin{proof}
    Let $p_1,\dots,p_t$ be the prime factors of $c$. For every $i=1,\dots,t$, we find an integer~$k_i\in\Z$ such that $p_i$ does not divide~$a+k_ib$.
    If $p_i$ does not divide $a$, we set~$k_i\coloneqq0$.
    If $p_i$ divides $a$, then it does not divide $a+b$, and we set $k_i\coloneqq 1$.
    By the Chinese remainder theorem, the system
    $$
    \left\{
    \begin{aligned}
    k\equiv k_i \mod p_i, \quad i=1,\dots,t\
    \end{aligned}
    \right.
    $$
    admits a solution $k\in\Z$. We conclude that $\gcd(a+kb,c)=1$ since, for any prime factor $p_i$ of $c$, we have
    $$a+kb\equiv a+k_ib\not\equiv0\mod p_i.$$
\end{proof}

The idea for proving Theorem \ref{isom theorem} is simply to turn the matrix $A(\varphi)$ associated with $\varphi\in\Aut(G)$ into the identity matrix by suitably applying the elementary matrices from Example \ref{ex:rowop}.

\begin{proof}[Proof of Theorem \ref{isom theorem}]
    Let $\varphi\in\Aut(G)$, and let $A(\varphi)$ be the matrix from Construction \ref{constr:end2mat}. By Lemma \ref{endomorphism lemma}, we have
    $$
    A(\varphi)=\begin{bmatrix}
        A_1 & 0 \\ \ast & A_2
    \end{bmatrix},\\[3pt]
    $$
    where $A_1$ and $A_2$ are from Definition \ref{def:gmatrix}.
    Since $\varphi$ is an automorphism, $A(\varphi)$ is invertible. In particular, the integer matrix $A_1$ is also invertible. Hence, suitably applying $\alpha_{ij}$ and $\psi_i$ to $\varphi$ turns $A_1$ into the identity matrix.
    Then, suitably applying~$\beta_{ij}$ turns $A(\varphi)$ into the matrix
    $$
    \begin{bmatrix}
        I & 0\\
        0 & A_2
    \end{bmatrix}.
    $$
    By Lemma~\ref{endomorphism lemma}, there exist $x_i\in\Z/\mu_i\Z$ for $i=1,\dots,r$ such that the last column of~$A_2$ is given as
    $$\left(\frac{\mu_1}{\mu_r}x_1,\dots,\frac{\mu_{r-1}}{\mu_r}x_{r-1},x_r\right).$$
    Let $x_i'\in\Z_{\geq 0}$ be the representing integer of $x_i$ between $0$ and $\mu_i-1$.
    By Proposition~\ref{generation condition}, the determinant of $A_2$ generates $\Z/\mu_r\Z$. In particular, we have
    $$\gcd\left(\frac{\mu_1}{\mu_r}x_1',\dots,\frac{\mu_{r-1}}{\mu_r}x_{r-1}',x_r',\mu_r\right)=1.$$ By Lemma \ref{gcd lemma}, there exists $k\in\Z$ such that
    $$x_r'+k\gcd\left(\frac{\mu_1}{\mu_r}x_1',\dots,\frac{\mu_{r-1}}{\mu_r}x_{r-1}'\right)=u\in(\Z/\mu_r\Z)^\times.
    $$
    Hence, we can use suitable $\gamma_{i,r}$ and then normalize the last coordinate with $\psi_{r,u^{-1}}$ to turn the last column of $A_2$ into
    $$
    \left(\frac{\mu_1}{\mu_r}x_1,\dots,\frac{\mu_{r-1}}{\mu_r}x_{r-1},1\right).
    $$
    Then, suitably applying $\delta_{i,r}$ turns the last column of~$A_2$ into $(0,\dots,0,1)$.
    Iterating the procedure for the other columns turns $A_2$ into a lower diagonal matrix with diagonal entries equal to one.
    Finally, we use suitable $\gamma_{i,j}$ to turn $A_2$ into the identity matrix.
\end{proof}

\section{Picard group and Gorenstein index of fwps}\label{sec:fwps}

In this section we provide explicit formulae for the Picard group and the Gorenstein index of a \emph{fake weighted projective space (fwps)}, that means a $\Q$-factorial toric Fano variety of Picard number one. Every fwps of dimension $n$ is encoded by an~$n\times(n+1)$ \emph{generator matrix}, that means a matrix
$$
P=\begin{bmatrix}
  v_0 & \dots & v_n  
\end{bmatrix},
$$
the columns of which are pairwise distinct primitive vectors generating $\Q^n$ as a convex cone. The fwps associated with $P$ is the toric Fano variety $Z(P)$ whose Fano polytope has vertices $v_0,\dots,v_n$. The class group of $Z$ is isomorphic to~$\Z^{n+1}/\im(P^*)$, where $P^*$ denotes the transpose of $P$. We present $\Cl(Z)$ in invariant factor form
$$\Z\oplus \Z/\mu_1\Z\oplus\dots\oplus\Z/\mu_r\Z.$$
Consider the projection $Q\colon \Z^{n+1}\rightarrow\Cl(Z)$, and let~$\omega_i=(w_i,\eta_i)\coloneqq Q(e_i)$, where $1\leq w_0\leq\dots\leq w_n$ and $\eta_i\in\Z/\mu_1\Z\oplus\dots\oplus\Z/\mu_r\Z$. We view $Q$ as a \emph{degree matrix in}~$\Cl(Z)$, that means
$$
Q=\begin{bmatrix}
    \omega_0 & \dots &\omega_n
\end{bmatrix}
$$
where any $n$ of the $\omega_i$ generate $\Cl(Z)$ as a group.
One can directly gain $Z$ as a quotient of $\K^{n+1}$ by the diagonal action of $H=\K^*\times F$ with weights $\omega_i$, where $F$ is finite and $H$ is the quasitorus with character group $\Cl(Z)$, see \cite{HHHS}*{Sec. 2}.

\begin{example}\label{ex:fwps}
    We consider the fwps $Z=Z(P)$ with generator matrix $P$ and degree matrix $Q$ in $\Cl(Z)=\Z\oplus \Z/4\Z$:
    $$
    P=\begin{bmatrix}
        1 & -2 & 1 & 0\\
        -2 & -2 & 0 & 1\\
        -3 & 2 & 1 & 0
    \end{bmatrix},\qquad 
    Q=\begin{bmatrix}
        1 & 1 & 1 & 4 \\
        \bar{0} & \bar{1} & \bar{2} & \bar{2}
    \end{bmatrix}.
    $$
    The quasitorus associated to the grading is $H=\K^*\times\{\pm 1, \pm \sqrt{-1}\}$, and $Z$ is isomorphic to the quotient of $\K^4$ by the action of $H$:
    $$
    (t,\sqrt{-1})\cdot z\coloneqq[tz_1,\sqrt{-1}tz_2,-tz_3,-t^4z_4].
    $$
\end{example}

\begin{notation}\label{not}
    Let $G=\Z\oplus \Z/\mu_1\Z\oplus\dots\oplus\Z/\mu_r\Z$ be in invariant factor form, and let $\omega_i=(w_i,\eta_i)\in G$ for $i=0,\dots,n$.
    We denote by $\eta_{ij}'\in\Z$ the representing integer of $\eta_{ij}$ between $0$ and $\mu_j-1$, and we set
    \[
    \begin{array}{lcll}  
    L & \coloneqq & \lcm(w_0,\dots,w_n),
    \\[3pt]
    L_{ij} & \coloneqq & \dfrac{L}{w_i}\eta_{ij}', \quad 0 \le i \le n, \ 1\leq j\leq r,
    \\[3pt]
    M_i & \coloneqq & \dfrac{\mu_i}{\gcd\left(\mu_i,L_{0i},\dots,L_{ni}\right)}, \quad 1 \le i \le r,
    \\[9pt]
    M & \coloneqq & \lcm(M_1,\dots,M_r).
    \end{array}
    \] 
\end{notation}

\begin{theorem} \label{thm:pic_fwps}
    Let $Z$ be a fwps with $\Cl(Z)=\Z\oplus \Z/\mu_1\Z\oplus\dots\oplus\Z/\mu_r\Z$ and  degree matrix $Q=\begin{bmatrix}
        \omega_0 & \dots & \omega_n
    \end{bmatrix}$.
    With Notation \ref{not}, we have
    \vspace{5pt}
    $$
    \begin{array}{lcl}
    \Pic(Z) & = &\langle(LM,0)\rangle,\\[8pt]
    \iota(Z) & = &\lcm\left(\dfrac{LM}{\gcd\left(LM,\sum_{i}w_i\right)},\dfrac{\mu_j}{\gcd\left(\mu_j,\sum_i \eta'_{ij}\right)}; \ j=1,\dots,r\right).
    \end{array}
    $$
\end{theorem}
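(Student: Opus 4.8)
The plan is to reduce the computation of $\Pic(Z)$ to an intersection of cyclic subgroups of $\Cl(Z)$ and then to evaluate that intersection with the generation criterion of Proposition~\ref{generation condition}; the Gorenstein index will follow formally. Recall that the fan of $Z$ has maximal cones $\sigma_i := \cone(v_0,\dots,\widehat{v_i},\dots,v_n)$, one for each $i$, and that these cover the fan. First I would use that every Cartier divisor on the affine chart $U_{\sigma_i}$ is principal, so a class $w\in\Cl(Z)$ is Cartier on $U_{\sigma_i}$ if and only if its restriction $r_i(w)\in\Cl(U_{\sigma_i})$ vanishes; hence the Cartier classes are $\Pic(Z)=\bigcap_{i=0}^n\ker(r_i)$. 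From $\Cl(Z)=\Z^{n+1}/\im(P^*)$ one obtains a canonical identification $\Cl(Z)/\langle\omega_i\rangle\cong\Z^{\{l\ne i\}}/\im(P^*\text{ with the }i\text{-th column deleted})=\Cl(U_{\sigma_i})$ under which $r_i$ becomes the quotient map; thus $\ker(r_i)=\langle\omega_i\rangle$ and
\[
\Pic(Z)\ =\ \bigcap_{i=0}^n\Z\omega_i\ \subseteq\ \Cl(Z).
\]

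Next I would evaluate this intersection. Membership $(c,\tau)\in\bigcap_i\Z\omega_i$ means $(c,\tau)=(c/w_i)\omega_i$ for every $i$, i.e.\ $w_i\mid c$ and $(c/w_i)\eta_i=\tau$ for all $i$. The easy inclusion is direct: for $c=LM$ one has $(LM/w_i)\eta_{ij}=M\,L_{ij}$ in $\Z/\mu_j\Z$, and since $M_j\mid M$ while $M_jL_{ij}=\mu_j\bigl(L_{ij}/\gcd(\mu_j,L_{0j},\dots,L_{nj})\bigr)\equiv 0$, this vanishes for all $i,j$; hence $(LM,0)=(LM/w_i)\omega_i\in\Pic(Z)$. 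The heart of the matter is the converse, namely that the torsion part of every element of the intersection vanishes. Granting this, membership reduces to $L\mid c$ together with $tL_{ij}\equiv 0\pmod{\mu_j}$ for all $i$ (writing $c=Lt$), and a prime-by-prime count gives $\{t:tL_{ij}\equiv 0\ \forall i\}=M_j\Z$; so the admissible $c$ are exactly the multiples of $L\,\lcm_j(M_j)=LM$, whence $\Pic(Z)=\langle(LM,0)\rangle$.

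The main obstacle is therefore the torsion vanishing: \emph{if $(c,\tau)\in\bigcap_i\Z\omega_i$ then $\tau=0$}. Here I would argue by contradiction using Proposition~\ref{generation condition}. Suppose $\tau_j\ne 0$ and choose a prime $p\mid\mu_j$ with $\beta:=v_p(\tau_j)<v_p(\mu_j)$, where $v_p$ is the $p$-adic valuation. From $(c/w_i)\eta_{ij}=\tau_j$ and $v_p(c/w_i)=v_p(c)-v_p(w_i)$ one reads off $v_p(\eta_{ij})=v_p(w_i)+\varepsilon$ for a common $\varepsilon$; since $\gcd(w_0,\dots,w_n)=1$ and $\eta_{0j},\dots,\eta_{nj}$ generate $\Z/\mu_j\Z$ (both consequences of Proposition~\ref{generation condition}), minimising $v_p$ over $i$ forces $\varepsilon=0$, so $v_p(\eta_{ij})=v_p(w_i)$ for all $i$ and $\beta=v_p(c)$. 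Comparing leading $p$-adic coefficients in $(c/w_i)\eta_{ij}=\tau_j$ then shows that the reductions satisfy $\overline{\eta_{ij}}=\lambda\,\overline{w_i}$ in $\Z/p\Z$ for a single constant $\lambda$. Consequently, under the projection $\Cl(Z)\to(\Z/p\Z)^{1+j}$ onto the free coordinate and the first $j$ torsion coordinates (well defined because $p\mid\mu_j\mid\mu_{j'}$ for $j'\le j$ in invariant factor form), every column $\omega_i$ lands in the proper hyperplane $\{x_{1+j}=\lambda\,x_1\}$; hence no $n$ of the $\omega_i$ can generate $\Cl(Z)$, contradicting the degree-matrix hypothesis. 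I expect this valuation-and-hyperplane step to be the technical crux.

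Finally, the Gorenstein index follows formally. The anticanonical divisor is $\sum_i D_i$, with class $\sum_i\omega_i=\bigl(\sum_i w_i,\sum_i\eta_i\bigr)\in\Cl(Z)$, and $\iota(Z)$ is the least positive integer $m$ with $m\sum_i\omega_i\in\Pic(Z)=\langle(LM,0)\rangle$. This membership splits into the independent conditions $LM\mid m\sum_i w_i$ and $\mu_j\mid m\sum_i\eta'_{ij}$ for $j=1,\dots,r$; the least $m$ satisfying all of them is the least common multiple of the individual minimal solutions $\tfrac{LM}{\gcd(LM,\sum_i w_i)}$ and $\tfrac{\mu_j}{\gcd(\mu_j,\sum_i\eta'_{ij})}$, which is exactly the claimed formula.
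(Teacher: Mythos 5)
Your argument is correct and follows the same overall architecture as the paper: identify $\Pic(Z)$ with $\bigcap_{i}\langle\omega_i\rangle$ via the local class groups of the affine charts (the paper leaves this identification implicit, citing only Proposition~\ref{pic_fwps}), verify the easy inclusion by checking $M_jL_{ij}\equiv 0$, prove that the torsion part of any element of the intersection vanishes, compute the free part via the divisibility count $\{t: tL_{ij}\equiv 0\ \forall i\}=M_j\Z$, and deduce the Gorenstein index formula from the two independent divisibility conditions. The one place where you genuinely diverge is the crux, the torsion-vanishing step. The paper fixes a prime power $p^{m}\,\|\,\mu_1$, uses Nakayama's lemma to find two columns whose projections generate $(\Z/p^{m}\Z)^2$, and extracts $p^{m}\mid k_j$ from the vanishing of a $2\times 2$ determinant applied to the relation $k_j\omega_j=k_\ell\omega_\ell$; you instead argue by contradiction with $p$-adic valuations, showing that a nonzero torsion component would force all reductions $\overline{\eta_{ij}}=\lambda\,\overline{w_i}$ in $\Z/p\Z$, so that every column lies in a proper hyperplane of a quotient of $\Cl(Z)$, contradicting generation. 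Both mechanisms ultimately rest on Proposition~\ref{generation condition}; the paper's determinant argument is shorter and works one prime power at a time, while your valuation argument is more self-contained (no appeal to Nakayama) at the cost of the bookkeeping around the capped valuation $v_p$ on $\Z/\mu_j\Z$ and the verification that $\varepsilon=0$ --- steps you carry out correctly. A small stylistic point in your favour: your form of the easy inclusion, $(LM,0)=(LM/w_i)\,\omega_i$, is the statement actually needed, whereas the paper's displayed identity $LM\omega_i=w_i(LM,0)$ only gives $w_i(LM,0)\in\langle\omega_i\rangle$ as written.
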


We obtain Theorem \ref{thm:pic_fwps} from the following statement about finitely generated abelian groups of rank one. 

\begin{proposition} \label{pic_fwps}
    Let $G=\Z\oplus \Z/\mu_1\Z\oplus\dots\oplus\Z/\mu_r\Z$ be in invariant factor form, and let $\omega_i=(w_i,\eta_i)\in G$ for $i=0,\dots,n$. Suppose $\omega_0,\dots,\omega_n$ generate~$G$. Then, with Notation \ref{not}, we have
    $$
    \bigcap_{i=0}^n\langle \omega_i\rangle=\langle(LM,0)\rangle.
    $$
\end{proposition}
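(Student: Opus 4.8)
The plan is to convert membership in the intersection into explicit divisibility conditions and then prove the two inclusions separately. First I would record the membership criterion: since $w_i \ge 1$, an element $(a,b) \in G$ lies in $\langle \omega_i \rangle$ if and only if $w_i \mid a$ and $b = (a/w_i)\eta_i$. Hence $(a,b) \in \bigcap_i \langle \omega_i \rangle$ forces $w_i \mid a$ for every $i$, so $L \mid a$; writing $a = Lt$ and $W_i := L/w_i$, the criterion becomes $b = tW_i\eta_i$ for all $i = 0,\dots,n$, whose $j$-th torsion coordinate is $tL_{ij} \bmod \mu_j$ in the notation of \ref{not}.

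For the inclusion $\langle (LM,0)\rangle \subseteq \bigcap_i \langle \omega_i \rangle$ I would check that $(LM,0) = (LM/w_i)\,\omega_i$ for each $i$. The free coordinate is clearly $LM$, and the $j$-th torsion coordinate equals $ML_{ij} \bmod \mu_j$. Writing $d_j := \gcd(\mu_j, L_{0j},\dots,L_{nj})$, so that $M_j = \mu_j/d_j$, one has $M_j L_{ij} = \mu_j (L_{ij}/d_j) \equiv 0 \pmod{\mu_j}$; since $M_j \mid M$, also $ML_{ij} \equiv 0 \pmod{\mu_j}$. Thus the torsion part vanishes and $(LM,0)$ lies in every $\langle \omega_i \rangle$.

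The reverse inclusion is the heart of the matter, and the main obstacle is to show that the torsion part $b$ of any intersection element must vanish. Here I would exploit that $\omega_0,\dots,\omega_n$ generate $G$: the element $(1,0)$ is a $\Z$-combination $\sum_i \lambda_i \omega_i$, which amounts to $\sum_i \lambda_i w_i = 1$ together with the torsion relation $\sum_i \lambda_i \eta_i = 0$. Starting from $b = tW_i\eta_i$ and multiplying by $w_i$ gives the key identity $w_i b = tL\eta_i$ for every $i$, using $w_i W_i = L$. Combining these with the weights $\lambda_i$ yields $b = \left(\sum_i \lambda_i w_i\right) b = \sum_i \lambda_i (tL\eta_i) = tL\sum_i \lambda_i \eta_i = 0$, so $b = 0$.

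It then remains to see that $M \mid t$. With $b = 0$ the membership criterion gives $tL_{ij} \equiv 0 \pmod{\mu_j}$ for all $i,j$. Fixing $j$ and using $\gcd(t\mu_j, tL_{0j},\dots,tL_{nj}) = |t|\,d_j$, the divisibilities $\mu_j \mid tL_{ij}$ force $\mu_j \mid |t|d_j$, that is $M_j \mid t$; taking the least common multiple over $j$ gives $M \mid t$. Writing $t = Ms$ then identifies $(a,b) = (LMs,0) = s\,(LM,0) \in \langle(LM,0)\rangle$, completing the inclusion and hence the proof. The only genuinely delicate point is the vanishing of $b$; everything else is elementary divisibility bookkeeping once the membership criterion and Notation \ref{not} are in place.
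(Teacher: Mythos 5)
Your proof is correct, and the two ``easy'' parts (the membership criterion giving $(LM,0)=(LM/w_i)\,\omega_i$, and the final deduction $M_j\mid t$ from $\mu_j\mid tL_{ij}$ via $\gcd(t\mu_j,tL_{0j},\dots,tL_{nj})=|t|\gcd(\mu_j,L_{0j},\dots,L_{nj})$) match the paper's computation essentially verbatim. Where you genuinely diverge is the crux, namely the vanishing of the torsion part $b$ of an element of the intersection. The paper proves this prime by prime: for each prime power $p^m$ exactly dividing $\mu_1$ it projects $G$ onto $(\Z/p^m\Z)^2$, invokes Nakayama's lemma to extract two columns $\omega_j,\omega_\ell$ whose images generate, and uses a $2\times 2$ determinant computation together with $k_j\omega_j=k_\ell\omega_\ell$ to conclude $p^m\mid k_j$, then assembles the coordinates by the Chinese remainder theorem and iterates over $\mu_1,\dots,\mu_r$. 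You instead write $(1,0)=\sum_i\lambda_i\omega_i$ (available since the $\omega_i$ generate $G$), so that $\sum_i\lambda_i w_i=1$ and $\sum_i\lambda_i\eta_i=0$, and combine this with the identity $w_i b=tL\eta_i$ to get $b=\bigl(\sum_i\lambda_i w_i\bigr)b=tL\sum_i\lambda_i\eta_i=0$ in one line. Your argument is shorter, avoids localization and determinants entirely, and isolates exactly how much of the generation hypothesis is used (only that $(1,0)$ lies in the span); the paper's argument is heavier but is the kind of local determinant reasoning that the authors reuse elsewhere (e.g.\ in Proposition~\ref{generation condition}). One cosmetic caveat: you should note explicitly, as you implicitly do, that the $w_i$ are nonzero (in the paper's setting $w_i\geq 1$), since both your membership criterion and Notation~\ref{not} divide by $w_i$.
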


\begin{proof}
    The inclusion "$\subseteq$" follows from
    $
    LM\omega_i=w_i(LM,0)$ for $i=1,\dots,n.
    $
    
    We show "$\supseteq$". Let $\omega=(w,\zeta)\in G$ such that $\omega=k_i\omega_i$ for integers $k_i\in\Z$, $i=0,\dots,n$. We first prove that $\zeta_1= 0\in\Z/\mu_1\Z$.
    Let $\mu_1$ factor as $p_1^{m_1}\cdots p_s^{m_s}$.
    Consider the projection onto the first two coordinates followed by the coordinate-wise projection onto $\Z/p_i^{m_i}\Z$:
    $$
    \pi\colon  G\rightarrow (\Z/p_i^{m_i}\Z)^2
    $$
    By Nakayama's lemma, $\pi(\omega_j),\pi(\omega_\ell)$ generate~$(\Z/p_i^{m_i}\Z)^2$ for some $1\leq j,\ell\leq n$.
    It follows that
    $$\det\begin{bmatrix}\pi(\omega_j) & \pi(\omega_\ell)
    \end{bmatrix}\in(\Z/p_i^{m_i}\Z)^\times.$$
    From $k_j\omega_j=k_\ell\omega_\ell$ we have
    $$    0=\det\begin{bmatrix}\pi(k_j\omega_j-k_\ell\omega_\ell) & \pi(\omega_\ell)
    \end{bmatrix}=k_j\det\begin{bmatrix}\pi(\omega_j) & \pi(\omega_\ell)
    \end{bmatrix}.
    $$
    Hence, $p_i^{m_i}$ divides $k_j$. In particular
    $\zeta_1=k_j\eta_{j1}\equiv 0 \mod p_i^{m_i}$.
    By the Chinese remainder theorem, we have $\zeta_1=0\in\Z/\mu_1\Z$. Iterating the argument for $\mu_2,\dots,\mu_r$, we prove that $\zeta=0$.
    
    Now, since $w_i$ divides $w$ for all $i$, there exists $k\in\Z$ such that $w=Lk$. In particular, we have
    $$
    kL_{ij}=k_i\eta_{ij}=0\in\Z/\mu_j\Z.
    $$
    It follows that $\mu_j$ divides
    $
    k\gcd\left(L_{0j},\dots,L_{nj}\right).
    $
    Hence, $M_j$ divides $k$ for all $j$. In turn, this implies that $LM$ divides $w$.
\end{proof}

\begin{proof}[Proof of Theorem \ref{thm:pic_fwps}]
    Since $\omega_0,\dots,\omega_n$ generate $\Cl(Z)$, the first assertion follows from Proposition \ref{pic_fwps}.
    Now, let~$k\in\Z_{\geq 0}$ and recall that $-\mathcal{K}_Z=\sum_i\omega_i$. Then $k(-\mathcal{K})$ is Cartier if and only if
    $$
    \begin{cases}
        LM &| \ \ k\sum_i w_i,\\
        \ \ \mu_j &| \ \ k\sum_i\eta'_{ij},\quad j=1,\dots,r.
    \end{cases}
    $$
    
\end{proof}

\begin{example}
    Let $Z$ be the fwps from Example \ref{ex:fwps}. Then $\Pic(Z)=\langle(8,\bar{0})\rangle$ and~$\iota(Z)=8$.
\end{example}

\section{Classification of reflexive simplices}\label{sec:algos}

As a direct consequence of Theorem \ref{thm:pic_fwps}, we have the following characterization of Gorenstein fwps in terms of their degree matrix.

\begin{proposition}\label{prop:gor_fwps}
    Let $Z$ be a fwps with $\Cl(Z)=\Z\oplus \Z/\mu_1\Z\oplus\dots\oplus\Z/\mu_r\Z$ and  degree matrix $Q=\begin{bmatrix}
        \omega_0 & \dots & \omega_n
    \end{bmatrix}$.
    Then $Z$ is Gorenstein if and only if, with Notation \ref{not}, we have
    $$
    \begin{cases}
        L & | \ \ \sum_i w_i,\\[3pt]
        M_j & | \ \ \dfrac{\sum_{i}w_i}{L},\quad j=1,\dots,r,\\[5pt]
        \eta_{nj} & = \ \ -(\eta_{0j}+\dots+\eta_{n-1j})\in\Z/\mu_j\Z,\quad j=1,\dots,r.
    \end{cases}
    $$
\end{proposition}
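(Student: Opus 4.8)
The plan is to derive the Gorenstein criterion directly from Theorem~\ref{thm:pic_fwps}. By definition, a fwps $Z$ is Gorenstein precisely when its Gorenstein index equals one, that is, when $-\mathcal{K}_Z$ is itself Cartier. Since Theorem~\ref{thm:pic_fwps} gives $\iota(Z)$ as an explicit least common multiple, the condition $\iota(Z)=1$ is equivalent to requiring that each term in that $\lcm$ equals one. So the entire proof reduces to unwinding what it means for each of these factors to vanish.

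**First I would** handle the free part. The contribution to $\iota(Z)$ coming from the free row is $LM/\gcd(LM,\sum_i w_i)$, and this equals one exactly when $LM$ divides $\sum_i w_i$. The task is then to show that ``$LM \mid \sum_i w_i$'' is equivalent to the two displayed conditions ``$L \mid \sum_i w_i$ and $M_j \mid (\sum_i w_i)/L$ for all $j$.'' One direction is immediate: if $LM \mid \sum_i w_i$ then certainly $L \mid \sum_i w_i$, so $(\sum_i w_i)/L$ is an integer, and since $M = \lcm(M_1,\dots,M_r)$ we get $M_j \mid M \mid (\sum_i w_i)/L$. Conversely, if $L \mid \sum_i w_i$ and every $M_j$ divides the integer $(\sum_i w_i)/L$, then $M = \lcm(M_1,\dots,M_r)$ divides it as well, whence $LM \mid \sum_i w_i$. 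This is the splitting of the single free-row factor into the first two lines of the claimed system.

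**Next I would** treat the torsion rows. For each $j$, the corresponding factor in $\iota(Z)$ is $\mu_j/\gcd(\mu_j, \sum_i \eta'_{ij})$, which equals one exactly when $\mu_j \mid \sum_i \eta'_{ij}$, i.e.\ when $\sum_i \eta_{ij} = 0 \in \Z/\mu_j\Z$. The goal is to see that this is equivalent to the third displayed line, $\eta_{nj} = -(\eta_{0j} + \dots + \eta_{n-1,j})$. But these are literally the same equation: $\sum_{i=0}^n \eta_{ij} = 0$ in $\Z/\mu_j\Z$ rearranges to $\eta_{nj} = -\sum_{i=0}^{n-1}\eta_{ij}$. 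So this step is purely formal once the reduction to $\iota(Z)=1$ is in place.

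**The main point to get right** — and the only place any care is needed — is the logical bookkeeping at the start: justifying that $Z$ is Gorenstein if and only if $\iota(Z)=1$, and that the $\lcm$ in Theorem~\ref{thm:pic_fwps} is one if and only if each of its arguments is one (which holds because each argument is a positive integer, and an $\lcm$ of positive integers is one precisely when they all are). After that, the proof is just the two elementary equivalences above, applied factor by factor. I do not anticipate a genuine obstacle; the content lies entirely in Theorem~\ref{thm:pic_fwps}, and this proposition is its immediate specialization to the Gorenstein case.
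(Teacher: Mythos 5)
Your proposal is correct and takes exactly the route the paper intends: the paper states this proposition without proof as ``a direct consequence of Theorem~\ref{thm:pic_fwps},'' and your argument is precisely the implicit unwinding — $Z$ Gorenstein iff $\iota(Z)=1$, each factor of the $\lcm$ equal to one, with the free-row condition $LM \mid \sum_i w_i$ split into $L \mid \sum_i w_i$ and $M_j \mid (\sum_i w_i)/L$ via $M=\lcm(M_1,\dots,M_r)$, and the torsion conditions rearranged. No gaps.
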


Notice that the first condition of Proposition \ref{prop:gor_fwps} involves only the torsion-free row of $Q$. In turn, the other two conditions are the same for all torsion rows, and only involve one torsion row at a time. This justifies the following definitions.

\begin{definition}\label{weight_vec}
    We call $w=[w_0,\dots,w_n]\in\Z^{n+1}_{\geq 1}$ a \emph{(Gorenstein) weight vector} if it is the degree matrix of a (Gorenstein) fwps $Z=Z(w)$ with $\Cl(Z)=\Z$. In this case, we write $\Pp(w_0,\dots,w_n)\coloneqq Z(w)$ and call it a \emph{weighted projective space~(wps)}.
\end{definition}

\begin{definition}\label{def: torsion_vec}
    Let $w=[w_0,\dots,w_n]$ be a weight vector and $\mu\in\Z_{\geq 2}$. We call~$\eta=[\eta_0,\dots,\eta_n]\in(\Z/\mu\Z)^{n+1}$ a \emph{(Gorenstein) torsion vector of order $\mu$ for $w$} if the matrix
    $$Q=\begin{bmatrix}
        w_0&\dots & w_n\\
        \eta_0 & \dots & \eta_n
    \end{bmatrix}$$
    is the degree matrix of a (Gorenstein) fwps $Z=Z(Q)$ with $\Cl(Z)=\Z\oplus\Z/\mu\Z$.
    \end{definition}
    
    \begin{definition}
    Let $w=[w_0,\dots,w_n]$ be a weight vector and $\mu\in\Z_{\geq 2}$. We say that two torsion vectors $\eta$ and $\zeta$ of order $\mu$ for $w$ are \emph{equivalent} if there exists an automorphism of~$\Z\oplus\Z/\mu\Z$ sending $(w_i,\eta_i)$ to $(w_i,\zeta_i)$ for $i=0,\dots,n$. We write~$\eta\leq\zeta$ if $[\eta'_0,\dots,\eta'_n]\leq_{lex}[\zeta_0',\dots,\zeta_n']$, where $\eta_i'$ and $\zeta_i'$ are the representative integers for $\eta_i$ and $\zeta_i$ between $0$ and $\mu-1$. We say that $\eta$ is \emph{minimal} if $\eta\leq\zeta$ for all $\zeta$ equivalent to $\eta$.
\end{definition}

\begin{proposition}\label{prop:gor_matrix}
    Let $Z$ be a Gorenstein fwps with $\Cl(Z)=\Z\oplus\Z/\mu_1\Z\oplus\dots\oplus\Z/\mu_r\Z$. Then $Z=Z(Q)$ for a degree matrix $$Q=\begin{bmatrix}
        w_0 & \dots & w_n\\
        \eta_{01} & \dots & \eta_{n1} \\
        \vdots & & \vdots \\
        \eta_{0r} & \dots & \eta_{nr} \\
    \end{bmatrix},$$
    where $w=[w_0,\dots,w_n]$ is a Gorenstein weight vector, $\eta_{\ast i}\coloneqq[\eta_{0i},\dots,\eta_{ni}]$ is a minimal Gorenstein torsion vector of order $\mu_i$ for $w$, and $\eta_{\ast i}< \eta_{\ast j}$ for all $i<j$ such that $\mu_i=\mu_j$.
\end{proposition}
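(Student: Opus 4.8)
The plan is to start from any degree matrix representing $Z$ and to normalize it using automorphisms of $\Cl(Z)$ together with column permutations; since $Z(Q)$ depends on $Q$ only up to these two kinds of operations, every intermediate matrix still represents $Z$. Write the columns as $\omega_i=(w_i,\eta_{i1},\dots,\eta_{ir})$, so that $w=[w_0,\dots,w_n]$ is the free row and $\eta_{\ast j}=[\eta_{0j},\dots,\eta_{nj}]$ is the $j$-th torsion row; as in Section~\ref{sec:fwps} we may take $1\le w_0\le\dots\le w_n$. The first point is the decoupling already visible in Proposition~\ref{prop:gor_fwps}. Composing the degree matrix property (any $n$ columns generate $\Cl(Z)$) with the surjection $\Cl(Z)\to\Cl(Z)/\mathrm{tors}=\Z$ shows that any $n$ of the $w_i$ generate $\Z$, so $w$ is a weight vector in the sense of Definition~\ref{weight_vec}; its Gorenstein property is exactly the first condition of Proposition~\ref{prop:gor_fwps}, which holds because $Z$ is Gorenstein. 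Likewise, for fixed $j$ the surjection $\pi_j\colon\Cl(Z)\to\Z\oplus\Z/\mu_j\Z$ onto the free part and the $j$-th torsion factor sends a generating set to a generating set, so the two-row matrix $\begin{bmatrix} w \\ \eta_{\ast j}\end{bmatrix}$ is a degree matrix with class group $\Z\oplus\Z/\mu_j\Z$; its Gorenstein conditions from Proposition~\ref{prop:gor_fwps} coincide with those for $Q$ and hence hold. Thus $\eta_{\ast j}$ is a Gorenstein torsion vector of order $\mu_j$ for $w$.

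Next I would make each torsion row minimal. The automorphisms of $\Z\oplus\Z/\mu_j\Z$ fixing the free coordinate are exactly $(a,b)\mapsto(a,\,ca+ub)$ with $u\in(\Z/\mu_j\Z)^*$ and $c\in\Z/\mu_j\Z$; inside $\Aut(\Cl(Z))$ these are realized by the generators $\psi_{j,u}$ and $\beta_{j,1}$ of Theorem~\ref{isom theorem}, which act only on the $j$-th torsion row and fix the free row and all other torsion rows. Hence I can replace $\eta_{\ast j}$ by the lexicographically smallest vector in its equivalence class, independently for each $j$, without disturbing the free row (still a Gorenstein weight vector) or the already-treated rows; and since these moves are isomorphisms of the associated fwps, each row remains a Gorenstein torsion vector. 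This makes every torsion row minimal.

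Finally I would sort the rows of equal order. In invariant factor form the indices with equal $\mu_j$ form consecutive blocks, and exchanging two torsion factors of the same order is an automorphism of $\Cl(Z)$ that fixes the free row and leaves each affected row minimal; applying such swaps I arrange $\eta_{\ast i}\le\eta_{\ast j}$ for $i<j$ within each block. The inequality is then strict: if $\eta_{\ast i}=\eta_{\ast j}$ with $\mu_i=\mu_j=\mu$, then the projection $\Cl(Z)\to\Z/\mu\Z\oplus\Z/\mu\Z$ onto the $i$-th and $j$-th factors sends every column into the diagonal subgroup $\{(x,x)\}$, so the columns cannot generate $\Z/\mu\Z\oplus\Z/\mu\Z$, contradicting the degree matrix property. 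Hence $\eta_{\ast i}<\eta_{\ast j}$, and the resulting $Q$ has the required form.

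The main obstacle is the bookkeeping that keeps the three normalizations compatible: one has to verify that the moves minimizing or reordering the torsion rows are automorphisms of the \emph{full} group $\Cl(Z)$ which leave the free row and the other, already normalized, torsion rows untouched. This is precisely where the explicit, row-wise description of the generators in Theorem~\ref{isom theorem} and the decoupling in Proposition~\ref{prop:gor_fwps} are used, together with the generation argument that forces strictness of the final order.
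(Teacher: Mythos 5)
Your proposal is correct and follows essentially the same route as the paper: decouple the Gorenstein and generation conditions row by row (the paper cites Theorem~\ref{thm:pic_fwps}, you use the equivalent Proposition~\ref{prop:gor_fwps} together with the projections $\Cl(Z)\to\Z$ and $\Cl(Z)\to\Z\oplus\Z/\mu_j\Z$), then normalize via automorphisms of $\Cl(Z)$ built from the generators of Theorem~\ref{isom theorem} that act on one torsion row at a time. You are in fact slightly more complete than the paper, which leaves implicit both the compatibility bookkeeping and the argument that rows of equal order must be distinct (your diagonal-subgroup observation), so the strict inequality $\eta_{\ast i}<\eta_{\ast j}$ can actually be achieved.
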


\begin{proof}
    Let $Q$ be any degree matrix for $Z$. By Theorem \ref{thm:pic_fwps}, the first row $w$ of $Q$ is a Gorenstein weight vector, and the $(i+1)$-th row of $Q$ is a Gorenstein torsion vector of order $\mu_i$ for $w$, for $i=1,\dots,r$. By Theorem \ref{isom theorem}, there exists an automorphism of $\Cl(Z)$ that turns all torsion vectors minimal and orders increasingly those with the same torsion order.
\end{proof}

We turn to our classification algorithm \ref{algo:classification}.
First we present the necessary ingredients, which then fit together in the final algorithm.

The following, presented for instance in~\cite{B}*{Sec. 5.4}, gives a bijection between Gorenstein weight vectors and the unit fraction decompositions of one.

\begin{remark}\label{rmk:wps}
    Let $W$ be the set of all Gorenstein weight vectors $(w_0,\dots,w_n)$, and
    \begin{align*}
    &U\coloneqq \left\{(u_0,\dots,u_n)\in\Z_{\geq 1}^{n+1}; \ u_0\geq \dots\geq u_n, \ \frac{1}{u_0}+\dots+\dfrac{1}{u_n}=1\right\}.
    \end{align*}
    Then, with $u\coloneqq\lcm(u_0,\dots,u_n)$ and $S\coloneqq w_0+\dots+w_n$, we have mutually inverse bijections
    $$
    \begin{array}{ccc}
        U & \longleftrightarrow &  W\\[5pt]
        (u_0,\dots,u_n) & \mapsto & \left(\frac{u}{u_0},\dots,\frac{u}{u_n}\right),\\[5pt]
        \left(\frac{S}{w_0},\dots,\frac{S}{w_n}\right) & \mapsfrom & (w_0,\dots,w_n).
    \end{array}
    $$
    There exist several algorithms for computing the set $U$, see for instance \cite{B}*{Algorithm 5.6}.
    Moreover, the lists for $n=1,\dots,8$ are available at \cite{OEIS}.
\end{remark} 

We now determine for which $\mu\in\Z_{\geq 2}$ there exists at least one Gorenstein torsion vector of order $\mu$ for a weight vector $w$.

\begin{remark} \label{rmk: torsion}
    Let $Z$ be a fwps with $\Cl(Z)=\Z\oplus \Z/\mu_1\Z\oplus\dots\oplus\Z/\mu_r\Z$ and  degree matrix $Q=\begin{bmatrix}
        \omega_0 & \dots & \omega_n
    \end{bmatrix}$. By Proposition $\ref{generation condition}$, $\eta_{0j},\dots,\eta_{n_j}$ generate~$\Z/\mu_j\Z$ for~$j=1,\dots,r$. Hence, with Notation \ref{not}, we have
    \begin{enumerate}
        \item $\dfrac{\mu_j}{M_j}=\gcd(\mu_j,L_{0j},\dots,L_{nj}) \ | \ L$;\\
        \item if $Z$ is Gorenstein, then $\mu\ |\ S$.
    \end{enumerate}
\end{remark}

By Remark \ref{rmk: torsion}, all the orders admitting at least one torsion vector for a Gorenstein weight vector $w$ are of the form~$\mu=ab$, where $a$ divides $L$ and $b$ divides~$\frac{S}{L}$. The following lemma provides a criterion to quickly exclude some pairs $(a,b)$, improving significantly the performance of our algorithm.

\begin{lem}\label{lem:pairs}
     Let $Z$ be a fwps with $\Cl(Z)=\Z\oplus \Z/\mu_1\Z\oplus\dots\oplus\Z/\mu_r\Z$ and  degree matrix $Q=\begin{bmatrix}
        \omega_0 & \dots & \omega_n
    \end{bmatrix}$. With Notation \ref{not}, set $\alpha_k\coloneqq\dfrac{LM_k}{\mu_k}$ for $k=1,\dots,r$. Then
    \begin{align*}
    &\dfrac{w_i}{\gcd(\alpha_k,w_i)} \ | \ \eta'_{ik}, \quad i=0,\dots,n,\\[3pt]
    &\gcd\left(\mu_k,\dfrac{w_iw_j}{\gcd(\alpha_k,w_iw_j)} ;\ i,j=0,\dots,n\right)=1.
    \end{align*}
\end{lem}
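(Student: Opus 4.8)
The plan is to first rewrite $\alpha_k$ in a shape that exposes the relevant divisibilities. Setting $d_k\coloneqq\gcd(\mu_k,L_{0k},\dots,L_{nk})$, Notation~\ref{not} gives $M_k=\mu_k/d_k$, so that
\[
\alpha_k \ = \ \frac{LM_k}{\mu_k} \ = \ \frac{L}{d_k}.
\]
By Remark~\ref{rmk: torsion}(1) the integer $d_k$ divides $L$, hence $\alpha_k$ is a genuine integer. Both asserted relations will then be read off from this identity together with the defining divisibilities $d_k\mid L_{ik}=\tfrac{L}{w_i}\eta'_{ik}$.

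For the first relation I would start from $d_k\mid\tfrac{L}{w_i}\eta'_{ik}$ and clear denominators: multiplying through by $w_i$ and substituting $L=\alpha_k d_k$ yields $d_kw_i\mid\alpha_k d_k\eta'_{ik}$, and cancelling $d_k$ gives $w_i\mid\alpha_k\eta'_{ik}$. Writing $g\coloneqq\gcd(\alpha_k,w_i)$ and cancelling $g$ turns this into $\tfrac{w_i}{g}\mid\tfrac{\alpha_k}{g}\eta'_{ik}$ with $\gcd(\tfrac{w_i}{g},\tfrac{\alpha_k}{g})=1$, so $\tfrac{w_i}{g}\mid\eta'_{ik}$, which is exactly the claim $\tfrac{w_i}{\gcd(\alpha_k,w_i)}\mid\eta'_{ik}$.

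For the second relation the natural strategy is to argue one prime at a time and to bring in the generation hypothesis. Projecting $\Cl(Z)$ onto its free part and $k$-th torsion summand $\Z\oplus\Z/\mu_k\Z$ carries the generating set $\{\omega_i\}$ to a generating set of $\Z\oplus\Z/\mu_k\Z$, so Proposition~\ref{generation condition} applied with one torsion factor tells us that the $2\times2$ minors $m_{ij}\coloneqq w_i\eta'_{jk}-w_j\eta'_{ik}$ satisfy $\gcd(\mu_k,\{m_{ij}\})=1$. Fixing a prime $p\mid\mu_k$ I would choose $i,j$ with $p\nmid m_{ij}$; since then $w_i\eta'_{jk}\not\equiv w_j\eta'_{ik}\pmod p$, the two products cannot both vanish mod $p$, so at least one of the pairs $\{v_p(w_i),v_p(\eta'_{jk})\}$, $\{v_p(w_j),v_p(\eta'_{ik})\}$ consists of zeros, where $v_p$ denotes $p$-adic valuation. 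Feeding in the inequalities $v_p(d_k)\le v_p(L_{\ell k})=v_p(L)-v_p(w_\ell)+v_p(\eta'_{\ell k})$ for $\ell\in\{i,j\}$, which are immediate from $d_k\mid L_{\ell k}$, the vanishing $v_p(\eta')$-term makes the relevant inequality collapse to $v_p(w_\ell)\le v_p(\alpha_k)$ while the other weight contributes valuation $0$, so $v_p(w_i)+v_p(w_j)\le v_p(L)-v_p(d_k)=v_p(\alpha_k)$. This is precisely the statement that $p\nmid\tfrac{w_iw_j}{\gcd(\alpha_k,w_iw_j)}$; letting $p$ range over all primes dividing $\mu_k$ then gives the asserted gcd equal to $1$.

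The routine parts are the denominator clearing in the first relation and the reduction of the gcd statement to a prime-by-prime analysis. The main obstacle is the valuation bookkeeping in the last step: one must correctly convert the single arithmetic input---that a chosen $2\times2$ minor is a $p$-adic unit---into the inequality $v_p(w_iw_j)\le v_p(\alpha_k)$, and the case split according to which of $w_i\eta'_{jk}$, $w_j\eta'_{ik}$ is the unit has to be handled symmetrically, so that in each branch the corresponding $v_p(\eta')$-term drops out of $v_p(d_k)\le v_p(L_{\ell k})$ and the bound tightens in the right direction.
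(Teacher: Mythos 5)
Your proof is correct. The first assertion is established exactly as in the paper, up to packaging: clearing denominators in $d_k\mid\tfrac{L}{w_i}\eta'_{ik}$ (with $d_k=\gcd(\mu_k,L_{0k},\dots,L_{nk})$ and $L=\alpha_kd_k$) to get $w_i\mid\alpha_k\eta'_{ik}$ and then dividing out $\gcd(\alpha_k,w_i)$ is an algebraic rearrangement of the paper's identity $\tfrac{w_i}{\gcd(\alpha_k,w_i)}=\tfrac{d_k}{\gcd(d_k,L/w_i)}$. For the second assertion you use the same key input (Proposition~\ref{generation condition} gives $\gcd(\mu_k,\,w_i\eta'_{jk}-w_j\eta'_{ik})=1$) but run the argument in the contrapositive, prime by prime: for each $p\mid\mu_k$ you pick a minor that is a $p$-adic unit and convert it, via $v_p(d_k)\le v_p(L_{\ell k})$, into the bound $v_p(w_iw_j)\le v_p(\alpha_k)$ for that pair. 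The paper instead deduces directly from the first assertion that $\tfrac{w_iw_j}{\gcd(\alpha_k,w_iw_j)}$ divides every minor $w_i\eta'_{jk}-w_j\eta'_{ik}$ (each of the two products is already divisible by it), so that this quantity's gcd with $\mu_k$ divides the gcd of the minors with $\mu_k$, which is $1$. The paper's route is a one-line consequence of the first part and avoids the case split and valuation bookkeeping; yours is equally valid and arguably makes the arithmetic mechanism more transparent, at the cost of length. Both proofs are complete.
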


\begin{proof}
    For the first assertion, observe
    $$
    \dfrac{w_i}{\gcd(\alpha_k,w_i)}=\dfrac{\dfrac{L}{\alpha_k}}{\dfrac{L}{\lcm(\alpha_k,w_i)}}=\dfrac{\gcd(\mu_k,L_{1k},\dots,L_{nk})}{\gcd\left(\mu_k,L_{1k},\dots,L_{nk},\dfrac{L}{w_i}\right)}\ |\ \eta'_{ik}.$$
    In particular, for any $i,j=0,\dots,n$, $k=1,\dots,r$ we have
    $$
    \dfrac{w_iw_j}{\gcd(\alpha_k,w_iw_j)}\ | \ w_i\eta'_{jk}-w_j\eta'_{ik}.
    $$
    The second assertion follows since, by Proposition \ref{generation condition}, we have
    $$\gcd(\mu_k,w_i\eta'_{jk}-w_j\eta'_{ik};\ i,j=0,\dots,n)=1.$$
\end{proof}

\begin{lem}\label{lem:lex_min}
    Let $w=[w_0,\dots,w_n]$ be a weight vector, $\mu\in\Z_{\geq 2}$ and $\eta=[\eta_0,\dots,\eta_n]$ a torsion vector of order $\mu$ for $w$. Write $\eta'_i$ for the representative integer of $\eta_i$ between $0$ and $\mu-1$, and set
    $$    d_i\coloneqq\dfrac{\mu\gcd(\mu,w_0,\dots,w_i)}{\gcd(\mu,w_0,\dots,w_{i-1})}.
    $$
    Then $\eta$ is minimal if and only if 
    $$
    \begin{cases}
        0\leq \eta'_i<d_i, \quad i=0,\dots,n,\\
        \eta\leq u\eta, \quad u\in (\Z/\mu\Z)^\times.
    \end{cases}
    $$
\end{lem}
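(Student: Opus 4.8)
The plan is to first pin down the automorphisms in play and then to separate the two group actions hidden in the equivalence relation. Specialising Theorem~\ref{isom theorem} to $G=\Z\oplus\Z/\mu\Z$ (so $k=r=1$), the only generators are the sign change $\psi_1$, the unit scalings $\psi_{1,u}$, and the map $\beta_{1,1}\colon(w,\eta)\mapsto(w,\eta+w)$; there are no $\alpha$, $\gamma$ or $\delta$ generators. Since a weight vector has $w_i\ge 1$, any automorphism sending $(w_i,\eta_i)$ to $(w_i,\zeta_i)$ acts on the free factor as $\pm\Id$ and must send $w_i$ to $w_i>0$, hence acts as $+\Id$ there; this rules out $\psi_1$. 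So every automorphism relevant to the equivalence has the form $(w,\eta)\mapsto(w,u\eta+cw)$ with $u\in(\Z/\mu\Z)^{\times}$ and $c\in\Z/\mu\Z$, and $\eta$ is minimal if and only if $\eta\le u\eta+cw$ for all such $u$ and $c$. I would organise the proof around the two subactions: the translations $\eta\mapsto\eta+cw$ generated by $\beta_{1,1}$, and the scalings $\eta\mapsto u\eta$.

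First I would treat the translation subgroup and show that the first displayed condition $0\le\eta_i'<d_i$ characterises the lexicographically smallest element of a single coset $\{\eta+cw\}$. The argument is a greedy coordinate sweep. Assuming coordinates $0,\dots,i-1$ are already at their minimal values, the shifts preserving them are exactly the $c\in\frac{\mu}{g_{i-1}}\Z/\mu\Z$, where $g_j\coloneqq\gcd(\mu,w_0,\dots,w_j)$ and $g_{-1}\coloneqq\mu$; such a shift moves the $i$-th coordinate through the subgroup $\langle\frac{\mu}{g_{i-1}}w_i\rangle$ of $\Z/\mu\Z$. The key computation, using $\gcd(w_i,g_{i-1})=g_i$, is
\[
\gcd\!\Big(\tfrac{\mu}{g_{i-1}}w_i,\ \mu\Big)\ =\ \tfrac{\mu}{g_{i-1}}\gcd(w_i,g_{i-1})\ =\ \tfrac{\mu}{g_{i-1}}\,g_i\ =\ d_i,
\]
so the available moves on coordinate $i$ are precisely the multiples of $d_i$, and minimality in that coordinate is equivalent to $0\le\eta_i'<d_i$. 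Running $i$ from $0$ (where $d_0=\gcd(\mu,w_0)$) to $n$ shows that $\eta$ is the lexicographic minimum of its translation coset if and only if the first condition holds.

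Next I would combine the two actions. Let $\rho$ denote the reduction sending an element to the unique representative of its translation coset satisfying the first condition; by the previous step $\rho(\zeta)\le\zeta$ for every $\zeta$, and $\rho$ is translation invariant, so $\rho(u\eta+cw)=\rho(u\eta)$. Hence the orbit elements satisfying the first condition are exactly the reduced scalings $\rho(u\eta)$, and the orbit minimum equals $\min_{u}\rho(u\eta)$. Therefore $\eta$ is minimal precisely when it satisfies the first condition and $\eta\le\rho(u\eta)$ for every unit $u$, which is the second displayed condition read with $u\eta$ put into the normal form of the first condition. Necessity of both conditions is immediate by specialising to $c=0$ and to $u=1$.

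The step I expect to be most delicate is exactly this interaction: the scaling condition must compare $\eta$ with the \emph{reduced} representative $\rho(u\eta)$, not with the raw scaling $u\eta$, because a translate of a unit multiple can be strictly smaller than the multiple itself, so checking $\eta\le u\eta$ literally is not enough. Verifying that the reduced representatives occurring in the orbit are precisely $\{\rho(u\eta)\}_{u}$ is what makes the two separately-checkable conditions jointly equivalent to minimality. A secondary point needing care is the bookkeeping in the greedy sweep: that the shifts fixing the first $i$ coordinates form the subgroup $\frac{\mu}{g_{i-1}}\Z/\mu\Z$, that $c=0$ lies in the relevant coset once the bound holds for the earlier coordinates, and the boundary convention $g_{-1}=\mu$ at $i=0$.
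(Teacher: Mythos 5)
Your handling of the translation action coincides with the paper's own proof: both run the same greedy B\'ezout sweep coordinate by coordinate, and both hinge on the identity $\gcd\bigl(\tfrac{\mu}{g_{i-1}}w_i,\mu\bigr)=\tfrac{\mu}{g_{i-1}}\gcd(w_i,g_{i-1})=d_i$, which shows that the box conditions $0\le\eta_i'<d_i$ pick out the unique lexicographic minimum of each coset $\{\eta+kw\}$. The real content of your write-up is the ``delicate step'' you flag at the end, and you are right to flag it: it is not a weakness of your argument but a genuine error in the lemma as stated and in the paper's proof, which disposes of sufficiency with the unjustified (and false) claim that only one element of the orbit satisfies both displayed conditions. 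Concretely, take $w=[2,3,5]$ and $\mu=4$, so $d=[2,2,4]$, and $\eta=[1,1,2]$; this is a torsion vector (the weights are pairwise coprime and all three $2\times 2$ minors of the degree matrix equal $\pm 1$), it is reduced, and $3\eta=[3,3,2]\ge\eta$, so $\eta$ satisfies both conditions of the lemma; yet $3\eta+3w\equiv[1,0,1]\pmod 4$ lies in the orbit, is reduced, and is lexicographically smaller, so $\eta$ is not minimal. Your diagnosis is exactly right: the orbit minimum is $\min_u\rho(u\eta)$, and $\eta\le u\eta$ does not imply $\eta\le\rho(u\eta)$, because multiplication by $u$ permutes the translation cosets without preserving reducedness. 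Your corrected criterion ($\eta$ reduced and $\eta\le\rho(u\eta)$ for every unit $u$) is the right statement, and your proof of it --- reduced orbit elements are exactly the $\rho(u\eta)$, so minimality is the conjunction of the two conditions --- is complete. The final classification is presumably unaffected, since the spurious ``minimal'' torsion vectors only produce duplicate degree matrices that Procedure~\ref{proc:filter} later eliminates via the normal form, but the lemma should be restated in terms of the reduced representatives.
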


\begin{proof}
    By Theorem \ref{isom theorem}, a torsion vector $\zeta$ of order $\mu$ for $w$ is equivalent to $\eta$ if and only if $\zeta=u\eta+kw$ for $u\in(\Z/\mu\Z)^\times$, $k\in\Z$.
    By Bézout's identity there exist~$a_0,b_0\in\Z$ such that $$0\leq\eta_0'+a_0\mu+b_0w_0<\gcd(\mu,w_0)=d_0.$$
    Hence, the minimal torsion vector equivalent to $\eta$ must also satisfy this condition.
    Furthermore, adding $kw$ to $\eta$ preserves the inequalities above if and only if $\mu$ divides~$kw_0$. By Bézout's identity, there exist $a_1,b_1\in\Z$ such that 
    \begin{align*}
    & \leq\eta_0'+a_1\mu+b_1w_0<\gcd(\mu,w_0)=d_0, \\    & 0\leq\eta_1'+a_1\mu+b_1w_1<\dfrac{\mu\gcd(\mu,w_0,w_1)}{\gcd(\mu,w_0)}=d_1.
    \end{align*}
    Iterating the argument for $i=2,\dots,n$, we prove that the conditions of the assertion are necessary for $\eta$ to be minimal. 
    Since only one torsion vector equivalent to $\eta$ satisfies the conditions, they are also sufficient.
\end{proof}

The following statement provides an efficient method for computing all minimal Gorenstein torsion vectors for a weight vector $w$.

\begin{procedure}\label{proc:torsion_vectors}
    Let $w=[w_0,\dots,w_n]$ be a Gorenstein weight vector, and consider any pair $(a,b)\in\Z_{\geq 1}^2$ such that
    $$a \ | \ L, \quad b \ | \ \frac{S}{L}, \quad \gcd\left(ab,\frac{w_iw_j}{\gcd(\frac{L}{b},w_iw_j)} ;\ i,j=0,\dots,n\right)=1.
    $$
    For $i=0,\dots,n$ set
    $$
    c_i\coloneqq\dfrac{w_i}{\gcd\left(\frac{L}{b},w_i\right)}, \quad d_i\coloneqq\dfrac{ab\gcd(ab,w_0,\dots,w_i)}{\gcd(ab,w_0,\dots,w_{i-1})},\quad K_i\coloneqq\left\{0,\dots,\left\lceil\frac{d_i}{c_i}\right\rceil-1\right\}.\\[3pt]
    $$
    Let $K$ be the set of all $[k_0,\dots,k_n]\in\prod_i K_i$ such that
    $$
    \begin{cases}
        c_nk_n\equiv-(c_0k_0+\dots+c_{n-1}k_{n-1})\mod ab, \\[5pt]
        \gcd\left(ab,c_ik_iw_j-c_jk_jw_i;\ i,j=0,\dots,\hat{\ell},\dots,n\right)=1, \ \ell=0,\dots,n,\\[5pt]
        \gcd\left(ab,\dfrac{Lc_0k_0}{w_0},\dots,\dfrac{Lc_nk_n}{w_n}\right)=a.
    \end{cases}
    $$
    Then $[\eta_0,\dots,\eta_n]$ is a minimal Gorenstein torsion vector of order $ab$ for $w$ if and only if $\eta\leq u\eta$ for all $u\in(\Z/ab\Z)^\times$ and there exists $[k_0,\dots,k_n]\in K$ such that $c_ik_i$ is a representative integer of $\eta_i$ for $i=0,\dots,n$.
\end{procedure}

\begin{proof}
    The assertion follows from Proposition \ref{prop:gor_fwps}, Lemma \ref{lem:pairs} and  Lemma \ref{lem:lex_min}.
\end{proof}

We now determine when the degree matrix of a fwps can be extended by a torsion row.

\begin{proposition}\label{prop:gluing}
    Let $Z$ be a fwps with $\Cl(Z)=\Z\oplus\Z/\mu_1\Z\oplus\dots\oplus\Z/\mu_r\Z$ and degree matrix $$Q=\begin{bmatrix}
        \omega_0 & \dots & \omega_n
    \end{bmatrix}=\begin{bmatrix}
        w_0 & \dots & w_n\\
        \eta_{0} & \dots & \eta_{n}
    \end{bmatrix}.$$
    Consider a torsion vector $\zeta=[\zeta_0,\dots,\zeta_n]$ of order $\mu\in\Z_{\geq 2}$ for $w=[w_0,\dots,w_n]$, and set
    $$
    Q_\zeta\coloneqq\begin{bmatrix}
        \omega_0 & \dots & \omega_n \\
        \zeta_0 & \dots & \zeta_n
    \end{bmatrix},\quad Q_{\zeta,i}\coloneqq\begin{bmatrix}
        \omega_0 & \dots & \hat{\omega_i} & \dots & \omega_n \\
        \zeta_0 & \dots & \hat{\zeta_i} & \dots & \zeta_n
    \end{bmatrix}.\\[3pt]
    $$
    Then, $Q_\zeta$ is the degree matrix of a fwps $Z_\zeta=Z(Q_\zeta)$ if and only if $\mu$ divides $\mu_r$ and the maximal minors of $Q_{\zeta,i}$ generate $\Z/\mu\Z$ for $i=0,\dots,n$.
\end{proposition}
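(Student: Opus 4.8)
The plan is to reduce everything to the generation criterion of Proposition~\ref{generation condition} and to exploit the fact that passing from $Q$ to $Q_\zeta$ appends a single new row, and hence imposes a single genuinely new generation condition, attached to the new torsion order $\mu$. I would first recall from the setup of Section~\ref{sec:fwps} that $Q_\zeta$ is the degree matrix of a fwps exactly when its free entries are positive and any $n$ of its $n+1$ columns generate the ambient group $G' := \Z \oplus \Z/\mu_1\Z \oplus \dots \oplus \Z/\mu_r\Z \oplus \Z/\mu\Z$, with $G'$ presented in invariant factor form. The positivity is inherited from the weight vector $w$, so the whole content sits in the generation condition. Since deleting the $i$-th column of $Q_\zeta$ produces precisely the $n$-column matrix $Q_{\zeta,i}$, the phrase ``any $n$ columns generate $G'$'' is equivalent to ``the columns of $Q_{\zeta,i}$ generate $G'$ for every $i=0,\dots,n$.''

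For the forward implication, I would argue that if $Q_\zeta$ is such a degree matrix, then $G'$ is its class group presented in invariant factor form; as the orders $\mu_1,\dots,\mu_r$ already satisfy $\mu_r \mid \dots \mid \mu_1$, appending $\mu$ as the last factor forces the divisibility $\mu \mid \mu_r$. Applying Proposition~\ref{generation condition} to the generating family of columns of each $Q_{\zeta,i}$, the condition attached to the lowest torsion row then yields that the maximal minors of the full matrix $Q_{\zeta,i}$ generate $\Z/\mu\Z$. For the converse, assuming $\mu \mid \mu_r$ makes $G'$ an invariant factor presentation, so Proposition~\ref{generation condition} becomes applicable; I would then verify its hypotheses for each $Q_{\zeta,i}$. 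The key observation is that the free row and the first $r$ torsion rows of $Q_{\zeta,i}$ coincide verbatim with those of the matrix $Q_i$ obtained by deleting the $i$-th column of $Q$. Since $Q$ is the degree matrix of $Z$, the columns of $Q_i$ generate $\Cl(Z)=\Z \oplus \Z/\mu_1\Z \oplus \dots \oplus \Z/\mu_r\Z$, and Proposition~\ref{generation condition} already supplies the minor conditions for the free row and for the rows of orders $\mu_1,\dots,\mu_r$. The single remaining condition---that the maximal minors of the full $Q_{\zeta,i}$ generate $\Z/\mu\Z$---is exactly the standing hypothesis, so Proposition~\ref{generation condition} gives that the columns of $Q_{\zeta,i}$ generate $G'$ for all $i$, whence $Q_\zeta$ is the degree matrix of a fwps $Z_\zeta$.

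The main obstacle I anticipate is conceptual rather than computational: one must make precise that the submatrices feeding the free-row and lower-torsion-row conditions of Proposition~\ref{generation condition} are \emph{literally the same} for $Q_i$ and $Q_{\zeta,i}$, so that nothing beyond the $\mu$-row condition is created, and one must recognize that the invariant factor form of $G'$ is exactly what the divisibility $\mu \mid \mu_r$ encodes. A secondary point to verify is the consistency of the matrix sizes: the argument presupposes that $G'$ can be generated by $n$ elements, i.e.\ that enough rows are available; this is automatically enforced, since otherwise no $n$ columns can generate $G'$ and $Q_\zeta$ simply fails to be a degree matrix, so the equivalence degenerates harmlessly on both sides.
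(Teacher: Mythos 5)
Your proposal is correct and follows essentially the same route as the paper: both directions reduce to Proposition~\ref{generation condition}, with the forward implication extracting $\mu \mid \mu_r$ from the invariant factor form of $\Cl(Z_\zeta)$ and the minor condition from the generation of $\Cl(Z_\zeta)$ by any $n$ columns, and the converse observing that the conditions for the free row and the rows of orders $\mu_1,\dots,\mu_r$ are already guaranteed by $Q$ being a degree matrix, so only the new $\mu$-row condition needs to be imposed. Your explicit remark that the relevant submatrices of $Q_{\zeta,i}$ and $Q_i$ coincide verbatim is exactly the (implicit) content of the paper's one-line appeal to Proposition~\ref{generation condition} in the converse direction.
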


\begin{proof}
    If $Q_\zeta$ is a degree matrix, then $\mu$ divides $\mu_r$ because $\Cl(Z_\zeta)$ is in invariant factor form. Since any $n$ of the $(\omega_i,\zeta_i)$ generate $\Cl(Z_\zeta)$, the maximal minors of~$Q_{\zeta,i}$ generate $\Z/\mu\Z$ for $i=0,\dots,n$ by Proposition \ref{generation condition}. Viceversa, if $\mu$ divides $\mu_r$,  then~$\Cl(Z)\oplus \Z/\mu\Z$ is in invariant factor form. Since $Q$ is a degree matrix, any $n$ of the $\omega_i$ generate $\Cl(Z)$. Hence, by Proposition \ref{generation condition}, if the maximal minors of $Q_{\zeta,i}$ generate $\Z/\mu\Z$ for $i=0,\dots,n$ then any $n$ of the $(\omega_i,\zeta_i)$ generate $\Cl(Z)\oplus \Z/\mu\Z$.
\end{proof}

\begin{corollary}\label{cor:incomp_row}
    Let $Z$ be a fwps with $\Cl(Z)=\Z\oplus\Z/\mu_1\Z\oplus\dots\oplus\Z/\mu_r\Z$ and degree matrix $$Q=\begin{bmatrix}
        \omega_0 & \dots & \omega_n
    \end{bmatrix}=\begin{bmatrix}
        w_0 & \dots & w_n\\
        \eta_{0} & \dots & \eta_{n}
    \end{bmatrix},\\[4pt]$$ and let $\zeta,\theta$ be two torsion vectors for $w=[w_0,\dots,w_n]$. By Proposition \ref{generation condition} and Proposition \ref{prop:gluing}, if $Q_\zeta$ is a degree matrix and $Q_\theta$ is not a degree matrix, then $(Q_\zeta)_\theta$ is also not a degree matrix.
\end{corollary}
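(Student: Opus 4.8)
The plan is to prove the statement by contraposition: assuming that $(Q_\zeta)_\theta$ is a degree matrix, I would show that $Q_\theta$ is a degree matrix as well. Since $Q_\zeta$ is assumed to be a degree matrix, this contradicts the hypothesis that $Q_\theta$ is not one, and the corollary follows. Throughout, write $\mu_\zeta$ and $\mu_\theta$ for the orders of the torsion vectors $\zeta$ and $\theta$.

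The first step is to track the torsion orders through the invariant factor form. Because $Q_\zeta$ is a degree matrix, its class group $\Cl(Z) \oplus \Z/\mu_\zeta\Z$ is in invariant factor form, whence $\mu_\zeta \mid \mu_r$. If moreover $(Q_\zeta)_\theta$ is a degree matrix, then applying Proposition \ref{prop:gluing} to the degree matrix $Q_\zeta$ yields $\mu_\theta \mid \mu_\zeta$, and therefore $\mu_\theta \mid \mu_r$. In particular $\Cl(Z) \oplus \Z/\mu_\theta\Z = \Z \oplus \Z/\mu_1\Z \oplus \dots \oplus \Z/\mu_r\Z \oplus \Z/\mu_\theta\Z$ is again in invariant factor form, which is exactly the first of the two requirements for $Q_\theta$ to be a degree matrix.

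It remains to verify the generation condition for $Q_\theta$, and here I would exploit the surjection $\pi \colon \Cl(Z) \oplus \Z/\mu_\zeta\Z \oplus \Z/\mu_\theta\Z \to \Cl(Z) \oplus \Z/\mu_\theta\Z$ that forgets the $\zeta$-coordinate: it sends the $i$-th column of $(Q_\zeta)_\theta$ to the $i$-th column of $Q_\theta$. Since $(Q_\zeta)_\theta$ is a degree matrix, any $n$ of its columns generate the domain of $\pi$; as $\pi$ is surjective, the corresponding $n$ columns of $Q_\theta$ generate the codomain. By Proposition \ref{generation condition} this is equivalent to the maximal minors of each $Q_{\theta,i}$ generating $\Z/\mu_\theta\Z$, so by Proposition \ref{prop:gluing} the matrix $Q_\theta$ is a degree matrix, which is the desired contradiction. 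Staying purely at the level of minors, the same conclusion can be reached by expanding each maximal minor of $((Q_\zeta)_\theta)_i$ along its $\zeta$-row: this exhibits it as an integer combination of maximal minors of $Q_{\theta,i}$, so the ideal generated by the former minors is contained in the ideal generated by the latter, and hence if the minors of $((Q_\zeta)_\theta)_i$ generate $\Z/\mu_\theta\Z$ then so do those of $Q_{\theta,i}$.

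I do not expect a genuine obstacle so much as a point requiring care: the bookkeeping with the invariant-factor ordering. One must confirm that $\mu_\theta$ really divides $\mu_r$, so that $\Cl(Z) \oplus \Z/\mu_\theta\Z$ is in invariant factor form and Propositions \ref{generation condition} and \ref{prop:gluing} apply to $Q_\theta$ verbatim, and that dropping the $\zeta$-coordinate indeed lands in this group rather than in some reordered presentation. Both facts follow from the chain $\mu_\theta \mid \mu_\zeta \mid \mu_r$ obtained in the first step, after which the argument is a direct application of the two cited propositions.
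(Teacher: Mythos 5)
Your argument is correct and follows exactly the route the paper intends: the corollary is stated as an immediate consequence of Propositions \ref{generation condition} and \ref{prop:gluing}, with no separate proof given, and your contrapositive argument (tracking the divisibility chain $\mu_\theta \mid \mu_\zeta \mid \mu_r$ via Proposition \ref{prop:gluing}, then transferring the generation condition from $(Q_\zeta)_\theta$ to $Q_\theta$ by projecting away the $\zeta$-coordinate, or equivalently by Laplace expansion of the maximal minors along the $\zeta$-row) is precisely the reasoning those citations are meant to encode. No gaps; you have simply written out the details the paper leaves implicit.
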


\begin{procedure}\label{proc:gluing}
    Let $w=[w_0,\dots,w_n]$ be a Gorenstein weight vector, and let~$H$ be the set of all pairs $(\mu,\eta)$, where $\mu\in\Z_{\geq 2}$ and $\eta\in(\Z/\mu\Z)^{n+1}$ is a minimal Gorenstein torsion vector of order $\mu$ for $w$. We build the degree matrices of all Gorenstein fwps with weight vector $w$ by progressively adding torsion factors to the class group.
    Clearly, $\Pp(w_0,\dots,w_n)$ is the only wps with weight vector $w$. By Proposition \ref{prop:gor_matrix}, all Gorenstein fwps with $\Cl(Z)=\Z\oplus \Z/\mu_1\Z$ admit a degree matrix
    $$
    Q_{\eta_1}\coloneqq\begin{bmatrix}
        w_0 & \dots & w_n\\
        \eta_{01} & \dots & \eta_{n1}
    \end{bmatrix},
    $$
    where $(\mu_1,[\eta_{01},\dots,\eta_{n1}])\in H$.
    Now, for each $(\mu_1,[\eta_{01},\dots,\eta_{n1}])\in H$, let $H_{\eta_1}$ be the set of all $(\mu_2,[\eta_{02},\dots,\eta_{n2}])$ in $H$ such that $[\eta_{01},\dots,\eta_{n1}]<[\eta_{02},\dots,\eta_{n2}]$ if $\mu_1=\mu_2$ and the matrix
    $$
    Q_{\eta_1,\eta_2}\coloneqq\begin{bmatrix}
        w_0 & \dots & w_n\\
        \eta_{01} & \dots & \eta_{n1}\\
        \eta_{02} & \dots & \eta_{n2}
    \end{bmatrix}
    $$
    is the degree matrix of a Gorenstein fwps. This is the case if and only if $Q_{\eta_1,\eta_2}$ satisfies the conditions of Proposition \ref{prop:gluing}. By Proposition \ref{prop:gor_matrix}, all Gorenstein fwps with $\Cl(Z)=\Z\oplus\Z/\mu_1\Z\oplus\Z/\mu_2\Z$ admit such a degree matrix. Now, for each such matrix $Q_{\eta_1,\eta_2}$, let $H_{\eta_1,\eta_2}$ be the set of all $(\mu_3,[\eta_{03},\dots,\eta_{n3}])\in H_{\eta_1}$ such that~$[\eta_{02},\dots,\eta_{n2}]<[\eta_{03},\dots,\eta_{n3}]$ if $\mu_2=\mu_3$ and the matrix
    $$
    Q_{\eta_1,\eta_2,\eta_3}\coloneqq\begin{bmatrix}
        w_0 & \dots & w_n\\
        \eta_{01} & \dots & \eta_{n1}\\
        \eta_{02} & \dots & \eta_{n2}\\
        \eta_{03} & \dots & \eta_{n3}
    \end{bmatrix}
    $$
    is the degree matrix of a Gorenstein fwps. This is the case if and only if $Q_{\eta_1,\eta_2,\eta_3}$ satisfies the conditions of Proposition \ref{prop:gluing}. By Proposition \ref{prop:gor_matrix} and Corollary \ref{cor:incomp_row}, all Gorenstein fwps with $\Cl(Z)=\Z\oplus\Z/\mu_1\Z\oplus\Z/\mu_2\Z\oplus \Z/\mu_3\Z$ admit such a degree matrix. We iterate this process until $H_{\eta_1,\dots,\eta_r}$ is the empty set. By Proposition \ref{generation condition}, the procedure terminates in at most $n$ steps. 
\end{procedure}

In order to detect when two generator matrices yield isomorphic fwps, we introduce a normal form for a generator matrix $P$. It shares similarities with the PALP normal form presented in~\cite{KS} and the normal form presented in~\cite{Bauerle2}. We denote by $\HNF(P)$ the Hermite normal form of the matrix $P$.

\begin{definition}
    Let $Z$ be a fwps with generator matrix $P$ and degree matrix $Q$:
    $$
    P=\begin{bmatrix}
        v_0 & \dots & v_n
    \end{bmatrix},\quad Q=\begin{bmatrix}
        w_0 & \dots & w_n\\
        \eta_0 & \dots & \eta_n
    \end{bmatrix},
    $$
    where $v_i\in\Z^{n}$, $w_i\in\Z_{\geq 1}$ and $\eta_{i}\in\Z/\mu_1\Z\oplus\dots\oplus\Z/\mu_r\Z$ for $i=0,\dots,n$. Let $S_Z$ be the set of all permutations $\sigma\in S_{n+1}$ such that $\sigma(i)<\sigma(j)$ if $w_i<w_j$. We define the \emph{normal form} of $P$ as 
    $$
    \Norm(P)\coloneqq\min(\HNF(P_\sigma);\ \sigma\in S_Z),
    $$
    where $P_\sigma=[v_{\sigma(0)},\dots,v_{\sigma(n)}]$, and the minimum is taken with respect to the lexicographic order row by row.
\end{definition}

\begin{remark}\label{rmk:Norm}
    Two fwps $Z=Z(P)$ and $Z'=Z(P')$ are isomorphic if and only if~$\Norm(P)=\Norm(P')$.
\end{remark}

\begin{procedure}\label{proc:filter}
\emph{Input:} a positive integer $n$; the collection $Q(n)$ of degree matrices produced by Procedure~\ref{proc:gluing}.\\ \emph{Procedure:}
\begin{enumerate}
  \item Initialize $P(n)\coloneqq\varnothing$.
  \item For each $D\in Q(n)$:
    \begin{enumerate}
      \item compute an associated generator matrix $P$;
      \item compute its normal form $\Norm(P)$;
      \item if $\Norm(P)\notin P(n)$, set $P(n)\coloneqq P(n)\cup\{\Norm(P)\}$.
    \end{enumerate}
  \item Return $P(n)$.
\end{enumerate}
\emph{Output}: a set $P(n)$ of generator matrices containing exactly one representative from each isomorphism class of $n$-dimensional Gorenstein fwps.
\end{procedure}

\begin{algorithm}[Classification of the Gorenstein fwps]\label{algo:classification}
\emph{Input:} a positive integer~$n$.
\emph{Algorithm:}
\begin{enumerate}
  \item\label{step 1} Compute or load the list of all Gorenstein weight vectors $w\coloneqq [w_0,\dots,w_n]$ according to Remark \ref{rmk:wps};
  \item\label{step 2} For each Gorenstein weight vector $w$ and each $\mu\in\Z_{\ge 2}$, compute all minimal Gorenstein torsion vectors of order $\mu$ for $w$ using Procedure \ref{proc:torsion_vectors};
  \item\label{step 3} For each Gorenstein weight vector $w$, compute all degree matrices obtained by suitably combining $w$ with the torsion vectors from~(\ref{step 2}) using Procedure~\ref{proc:gluing};
  \item\label{step 4} Compute the set of associated generator matrices to the degree matrices from~(\ref{step 3}), then select a subset that contains exactly one generator matrix from each isomorphism class of fwps, using Procedure \ref{proc:filter}.
\end{enumerate}
\emph{Output:} a list of representatives for the isomorphism classes of $n$-dimensional Gorenstein fwps, complete according to Proposition \ref{prop:gor_matrix}.\\
\end{algorithm}

\begin{bibdiv}
\begin{biblist}

\bib{B}{article}{
  author={Batyrev, Victor V.},
  title={Dual polyhedra and mirror symmetry for Calabi–Yau hypersurfaces in toric varieties},
  journal={J. Algebraic Geom.},
  volume={3},
  date={1994},
  number={3},
  pages={493--535},
  review={\MR{1269718}},
  eprint={alg-geom/9310003},
}

\bib{KS3}{article}{
  author={Kreuzer, Maximilian},
  author={Skarke, Harald},
  title={Classification of reflexive polyhedra in three dimensions},
  journal={Adv. Theor. Math. Phys.},
  volume={2},
  date={1998},
  number={4},
  pages={853--871},
  issn={1095-0761},
  review={\MR{1663339}},
  doi={10.4310/ATMP.1998.v2.n4.a5},
}

\bib{KS4}{article}{
  author={Kreuzer, Maximilian},
  author={Skarke, Harald},
  title={Complete classification of reflexive polyhedra in four dimensions},
  journal={Adv. Theor. Math. Phys.},
  volume={4},
  date={2000},
  number={6},
  pages={1209--1230},
  issn={1095-0761},
  review={\MR{1894855}},
  doi={10.4310/ATMP.2000.v4.n6.a2},
}

\bib{SS}{article}{
  author={Sch{\"o}ller, Friedrich},
  author={Skarke, Harald},
  title={All weight systems for Calabi–Yau fourfolds from reflexive polyhedra},
  journal={Comm. Math. Phys.},
  volume={372},
  date={2019},
  number={2},
  pages={657--678},
  issn={0010-3616},
  review={\MR{4032876}},
  doi={10.1007/s00220-019-03331-9},
}

\bib{Bauerle2}{article}{
  author={B{\"a}uerle, Andreas},
  title={Sharp volume and multiplicity bounds for Fano simplices},
  journal={J. Algebraic Combin.},
  volume={61},
  date={2025},
  number={1},
  pages={Paper~No.~9, 38},
  issn={0925-9899},
  review={\MR{4833260}},
  doi={10.1007/s10801-024-01366-3},
}

\bib{R}{article}{
  author={Ranum, Arthur},
  title={The group of classes of congruent matrices with application to the group of isomorphisms of any abelian group},
  journal={Trans. Amer. Math. Soc.},
  volume={8},
  date={1907},
  number={1},
  pages={71--91},
  review={\MR{1500775}},
  doi={10.1090/S0002-9947-1907-1500775-1},
}

\bib{HR}{article}{
  author={Hillar, Christopher J.},
  author={Rhea, Darren L.},
  title={Automorphisms of finite abelian groups},
  journal={Amer. Math. Monthly},
  volume={114},
  date={2007},
  number={10},
  pages={917--923},
  review={\MR{2363058}},
  doi={10.1080/00029890.2007.11920485},
}

\bib{HHHS}{article}{
  author={H{\"a}ttig, Daniel},
  author={Hafner, Beatrice},
  author={Hausen, J{\"u}rgen},
  author={Springer, Justus},
  title={Del Pezzo surfaces of Picard number one admitting a torus action},
  journal={Annali di Matematica Pura ed Applicata (1923 -)},
  date={2025},
  doi={10.1007/s10231-025-01552-5},
  note={Advance online publication},
}

\bib{KS}{article}{
  author={Kreuzer, Maximilian},
  author={Skarke, Harald},
  title={PALP: A package for analysing lattice polytopes with applications to toric geometry},
  journal={Comput. Phys. Commun.},
  volume={157},
  number={1},
  date={2004},
  pages={87--106},
  issn={0010-4655},
  doi={10.1016/S0010-4655(03)00491-0},
}

\bib{OEIS}{misc}{
  author       = {{OEIS Foundation Inc.}},
  title        = {Entry A002966 in The On-Line Encyclopedia of Integer Sequences},
  url = {\url{https://oeis.org/A002966}}
}

\bib{zenodo:17296449}{misc}{
  author={Ghirlanda, Marco},
  title={Reflexive simplices up to dimension six},
  date={2025-10},
  note={Dataset, version 1.0.0},
  publisher={Zenodo},
  doi={10.5281/zenodo.17296449},
  url={https://doi.org/10.5281/zenodo.17296449},
}

\end{biblist}
\end{bibdiv}

\end{document}